\newtheorem{theorem}{Theorem}[section]
\newtheorem{lemma}[theorem]{Lemma}
\newtheorem{cor}[theorem]{Corollary}
\theoremstyle{definition}
\theoremstyle{remark}
\newtheorem{remark}[theorem]{Remark}
\numberwithin{equation}{section}
\newcommand{\N}{\mathbb{N}}
\newcommand{\R}{\mathbb{R}}
\newcommand{\n}{\mathop{\mathrm{\mathfrak{n}}}}
\date{}
\begin{document}

\title{\textbf{Fractional Bessel Process with Constant Drift: Spectral Analysis and Queueing Applications}}
\maketitle

\centerline{\author{\textbf{I. Papi\'c}$^{\textrm{*}}$}}

{\footnotesize{
$$\begin{tabular}{l}
 
  $^{\textrm{*}}$ \emph{School of Applied Mathematics and Informatics, J.J. Strossmayer University of Osijek, Osijek, Croatia} \\
\end{tabular}$$}}
\bigskip

\noindent\textbf{Abstract.}
We introduce a fractional Bessel process with constant negative drift, defined as a time-changed Bessel process via the inverse of a stable subordinator, independent of the base process. This construction yields a model capable of capturing subdiffusive behavior and long-range dependence, relevant in various complex systems. We derive an explicit spectral representation of its transition density, extending the non-fractional setting. Using this representation, we establish several analytical properties of the process, including its stationary distribution and correlation structure. These results provide new insights into the behavior of fractional diffusions and offer analytical tools for applications in queueing theory, mathematical finance, and related domains. In particular, we demonstrate their applicability through a concrete problem in queueing theory.

\bigskip

\noindent\textbf{Key words.} Bessel process, Fractional Bessel process, Mittag-Leffler function, Spectral representation, Transition density, Whittaker function, Queueing theory, Heavy-traffic approximation.

\bigskip

\noindent\textbf{Mathematics Subject Classification (2020):} 60G22, 60J60, 35R11, 35P10, 60K25.

\bigskip

\section{Introduction} \label{intro}
The Bessel process with constant drift was first studied in detail by Linetsky (\cite{linetsky_2004}), who explicitly calculated the spectral representation of its transition density, making it statistically tractable. His motivation for studying this particular stochastic process stemmed from its applications in mathematical finance and queueing theory. In particular, \cite{Coffman1998} presents the well-known heavy-traffic averaging principle (HTAP). In queueing theory language, this principle states that the work in each queue is emptied and refilled at a rate much faster than the rate at which the total workload changes. This implies that the total workload can be treated as approximately constant during the course of a cycle, while the individual queue workloads fluctuate in a fluid-like manner.

In polling systems (multi-queue systems where a single server visits queues in a fixed order), HTAP leads to a diffusion approximation involving the Bessel process with constant drift. Additionally, Linetsky shows that this process can be transformed into well-known models in mathematical finance, such as the $3/2$ model and the reciprocal of the square-root CIR process, thereby facilitating tractable pricing methods, for example, in the evaluation of Asian options (\cite{linetsky_2004}).

Motivated by the analytical tractability and wide applicability of this process, we propose a new time-changed version of the Bessel process with constant drift. In our formulation, the time change is governed by the inverse of a stable subordinator, independent of the outer process. This extension introduces flexibility into the model and enables it to capture subdiffusive behavior and long-range dependence, phenomena frequently observed in complex systems. Such time-changed processes arise naturally when modeling power-law waiting times between particle jumps, and have proven useful across disciplines such as hydrology (\cite{chakraborty2009parameter}), biology (\cite{leonenko2025time}), and finance (\cite{fontana2023cbi}). For a broader perspective on the interplay between stochastic modeling, fractional calculus, and differential equations, see \cite{meerschaert2019stochastic}.

Since Linetsky's foundational work, the Bessel process with constant drift has continued to attract significant interest, both theoretical and applied. Alternative closed-form expressions for its transition density have been derived in recent works (\cite{Guarnieri2017}, \cite{Zhenyu2016}), though our analysis builds on Linetsky’s spectral approach. Moreover, the process has appeared as a special case of more general models driven by fractional Brownian motion (\cite{Bishwal2021}). Its applications span diverse domains: in climatology, it has been used to describe Arctic sea-ice thickness dynamics (\cite{toppaladoddi2015theory, toppaladoddi2017statistical}); in biology, it models the dynamics of DNA bubble formation under thermal fluctuations (\cite{fogedby2007dynamics}); and in bird navigation, it appears under the name “Pole-seeking Brownian motion” (\cite{kendall1974pole}). In conformal quantum mechanics, the process is linked to quantum Hamiltonians with Coulomb potentials (\cite{rajabpour2009bessel}), which is also noted by Linetsky. Further applications in mathematical finance and economics can be found in \cite{Burtnyak2017} and \cite{li2016bessel}.

The structure of the paper is as follows. In Section \ref{Bessel_process}, we briefly review key properties of the Bessel process with constant drift. Section \ref{frac_Bessel_process} introduces the fractional Bessel process (with constant drift) and presents our main results. Specifically, we derive the spectral representation of its transition density and provide a strong solution to the corresponding fractional Cauchy problem. We also demonstrate that the stationary distribution remains the same as in the non-fractional case (a gamma distribution). Furthermore, the spectral representation enables us to compute the correlation structure of the (fractional) Bessel process, an expression not previously available in the literature, even for the non-fractional case. As a result, we establish that the process exhibits long-range dependence. Finally, in Section \ref{Applications}, we present concrete application of the model in queueing theory.
\section{Bessel process}\label{Bessel_process}
The Bessel process with constant negative drift was studied in detail in \cite{linetsky_2004}, where the spectral representation of the associated transition density was explicitly computed. The defining stochastic differential equation (SDE) for this process is
\begin{equation}\label{Bessel_SDE}
	dX(t)=\left(\frac{\nu+\frac{1}{2}}{X(t)}+\mu\right)dt + dB(t), \quad \nu \in \langle -1, +\infty \rangle \backslash \{-1/2\}, \quad \mu \in \R \backslash \{0\},
\end{equation}
where $(B(t), \, t \geq 0)$ denotes a standard Brownian motion. The state space of this process is   $\left[ 0, +\infty\right>$, and its parameters are $\mu$ and $\nu$. The infinitesimal generator of the process is given by
\begin{equation}\label{GeneratorBessel}
	\mathcal{G}f(x)=\left(\frac{\nu+\frac{1}{2}}{x}+\mu\right)f'(x)+\frac{1}{2}f''(x)
\end{equation}
with the domain
     \begin{equation}\label{BesselDomain}
	D(\mathcal{G})=\bigg \{ f \in L^{2}\left(\left[ 0, +\infty\right>, \mathfrak{m}\right) \cap C^2\left( \left[ 0, +\infty\right> \right) \colon \mathcal{G}f \in L^{2}\left(\left[ 0, +\infty\right>, \mathfrak{m} \right), \quad  \lim_{x \downarrow 0}\frac{f'(x)}{\mathfrak{s}(x)}=0\bigg\},
\end{equation}
where the speed and scale densities are
\begin{equation*}
	\mathfrak{s}(x)=x^{-2\nu-1}e^{-2\mu x}, \quad \mathfrak{m}(x)=2x^{2\nu +1}e^{2\mu x}.
\end{equation*}
If the parameter $\mu$ is positive, the process is transient, while for negative $\mu$, the process admits a stationary distribution with gamma density:
\begin{equation}\label{stat_dist_Bessel}
	\pi(x)=\frac{-2\mu}{\Gamma(2\nu+2)}\left(-2\mu x\right)^{2\nu+1}e^{2\mu x}, \quad x \geq 0.
\end{equation}
We fully adapt the notation in \cite{linetsky_2004}, as our main result is an extension of that work.

The spectral representation of a stochastic process in this context involves solving the eigenvalue problem for the generator $\mathcal{G}$ (or equivalently, for the Sturm–Liouville operator $(-\mathcal{G})$):
\begin{equation*}
	(-\mathcal{G})f=\lambda f, \quad \lambda >0.
\end{equation*}
In general, the spectrum consists of two disjoint components: a discrete and a continuous (essential) part (see \cite{simon1980methods} and \cite{linetsky2004spectral} for details).

In our setting, the discrete spectrum of $\mathcal{G}$, given by \eqref{GeneratorBessel}, depends on the parameter values:
\begin{itemize}
	\item For $(\nu, \mu) \in \langle -1/2, +\infty \rangle \times \langle -\infty, 0 \rangle$:
	\begin{equation*}
	\sigma_d(\mathcal{G})=\left\{\frac{\mu^2}{2}\left(1-\left(\frac{\nu+\frac{1}{2}}{n+\nu+\frac{1}{2}}\right)^2\right), \,n \in \N_0\right\},
\end{equation*}	 
with eigenvalues
	\begin{equation}\label{Bessel_eigenvalue}
		\lambda_n=\frac{\mu^2}{2}\left(1-\left(\frac{\nu+\frac{1}{2}}{n+\nu+\frac{1}{2}}\right)^2\right), \quad n \in \mathbb{N}_0,
	\end{equation}
and eigenfunctions related to generalized Laguerre polynomials $L_n^{2\nu}(x)$, which solve:
	\begin{equation*}
		xy''+\left(2\nu+1-x\right)y'+ny=0, \, n \in \N_0.
	\end{equation*}
The corresponding eigenfunctions are
\begin{equation} \label{eigenfunction_discrete}
\Psi_n(x)=\frac{n! \Gamma(1+2\nu)}{\Gamma(1+2\nu+n)}e^{-\frac{n\mu x}{\left(\nu+n+1/2\right)}}\left(-2 \frac{\mu \left(\nu+1/2\right)}{\nu+n+1/2}\right)^{\nu+1/2} L^{2\nu}_n \left(-2x \frac{\mu \left(\nu+1/2\right)}{\nu+n+1/2}\right).
\end{equation}
Note that the eigenvalues cluster near the constant $\mu^2/2$.
	\item For $(\nu, \mu) \in \langle -1, -1/2 \rangle \times \langle -\infty, 0 \rangle$: 
		\begin{equation*}
		\sigma_d(\mathcal{G})=\left\{0\right\}.
	\end{equation*}	 
	\item For $(\nu, \mu) \in \langle -1/2, +\infty \rangle \times \langle 0, \infty \rangle$: 
		\begin{equation*}
	\sigma_d(\mathcal{G})=\emptyset.
	\end{equation*}	 
	\item For $(\nu, \mu) \in \langle -1, -1/2 \rangle \times \langle  0, \infty \rangle$:
		\begin{equation*}
		\sigma_d(\mathcal{G})=\left\{\frac{\mu^2}{2}\left(1-\left(\frac{\nu+\frac{1}{2}}{n+\nu+\frac{1}{2}}\right)^2\right), \,n \in \N_0\right\}.
	\end{equation*}	
\end{itemize}

The essential part of the spectrum corresponds to the continuous spectrum,
\begin{equation*}\sigma_c(\mathcal{G})=\left[\frac{\mu^2}{2}, +\infty\right>,
\end{equation*}
while functions associated with essential part of the spectrum are
\begin{align} 
	\Psi(x, \lambda)=x^{-\nu-1/2} e^{-\mu x}M_{i\beta/\sqrt{2\lambda-\mu^2}, \nu}(-2i\sqrt{2\lambda-\mu^2} x), \label {eigenfunction_essential}\\
	 \hat{\Psi}(x, \lambda)=x^{-\nu-1/2} e^{-\mu x} W_{i\beta/\sqrt{2\lambda-\mu^2}, \nu}(-2i\sqrt{2\lambda-\mu^2} x),
\end{align}
where 
\begin{align*}
	M_{\chi, \nu}(z):&=z^{\nu+1/2}e^{-z/2} M(\nu-\chi+1/2;1+2\nu,z),  \\
		W_{\chi, \nu}(z):&=z^{\nu+1/2}e^{-z/2} U(\nu-\chi+1/2;1+2\nu,z), \quad -\pi<arg(z)\leq \pi,
\end{align*}
are the Whittaker functions with $M(a,b;z)$ and $U(a,b;z)$ being first and second confluent hypergeometric functions (for details about Whittaker and related special functions see e.g. \cite{Buchholz}).

The Bessel process is a time-homogeneous diffusion. Let $p(x, t; y)$ denote its transition density:
\begin{equation*}
	p(x, t; y) = \frac{d}{dx}P(X(t) \leq x |\, X(0)=y).
\end{equation*}
Define
\begin{equation}\label{beta_def}
	\beta:=-\mu\left(\nu+\frac{1}{2}\right).
\end{equation}
According to \cite{linetsky_2004}, the spectral representation of transition density $p(x, t; y)$ of Bessel process  with negative constant drift (as defined by \eqref{Bessel_SDE}) is given by
\begin{equation}\label{Bessel_trans_dens}
	p(x,t;y)=p_d(x,t;y)+p_c(x,t;y),
\end{equation}
where $p_d$ corresponds to the discrete spectrum contribution and $p_c$ to the continuous one. The explicit expressions for \( p_d \) and \( p_c \) vary depending on the values of \( \nu \) and \( \mu \), and are given below. They reflect the decomposition of the transition density into discrete and continuous spectral components associated with the generator \( \mathcal{G} \).

\[
p_d(x,t;y)=
\begin{cases}
	\pi(x)+\sum\limits_{n=1}^{\infty}\frac{n!\beta\left(2\beta x\right)^{2\nu+1}}{\left(n+\nu+\frac{1}{2}\right)^{2\nu+3}\Gamma(2\nu+n+1)} \\ \times e^{2\mu x-\frac{\mu n}{n+\nu+\frac{1}{2}}(x+y)-\lambda_n t}L^{2\nu}_{n}\left(\frac{2 \beta x}{n+\nu+\frac{1}{2}}\right)L^{2\nu}_{n}\left(\frac{2 \beta y}{n+\nu+\frac{1}{2}}\right) &,\, (\nu, \mu) \in \langle -1/2, +\infty \rangle \times \langle-\infty, 0 \rangle\\
	\pi(x) &,\, (\nu, \mu) \in \langle -1, -1/2 \rangle \times \langle-\infty, 0 \rangle\\
	0 &,\, (\nu, \mu) \in \langle -1/2, +\infty \rangle \times \langle 0, +\infty \rangle\\
	\sum\limits_{n=1}^{\infty}\frac{n!\beta\left(2\beta x\right)^{2\nu+1}}{\left(n+\nu+\frac{1}{2}\right)^{2\nu+3}\Gamma(2\nu+n+1)}\\ \times e^{2\mu x-\frac{\mu n}{n+\nu+\frac{1}{2}}(x+y)-\lambda_n t} L^{2\nu}_{n}\left(\frac{2 \beta x}{n+\nu+\frac{1}{2}}\right)L^{2\nu}_{n}\left(\frac{2 \beta y}{n+\nu+\frac{1}{2}}\right) &,\, (\nu, \mu) \in \langle -1, -1/2 \rangle \times \langle 0, +\infty \rangle.
\end{cases}
\]
and
	\begin{align*}
	p_{c}(x,t;y)=&\frac{1}{2\pi}\int\limits_{\mu^2/2}^{\infty}\frac{1}{k(\lambda)}\left|\frac{\Gamma(\frac{1}{2}+\nu+i\beta/ k(\lambda))}{\Gamma(1+2\nu)}\right |^2 e^{-\lambda t}\left(\frac{x}{y}\right)^{\nu+\frac{1}{2}}e^{\mu(x-y)+\pi \beta /k(\lambda)} \\
	&\times M_{i\beta/ k(\lambda),\, \nu}(-2ik(\lambda) y) M_{-i\beta/ k(\lambda), \, \nu}(2ik(\lambda) x)\, d\lambda,
\end{align*}
for $y>0$, while
\[
p_d(x,t;0)=
\begin{cases}
	\pi(x)+\sum\limits_{n=1}^{\infty}\frac{\beta\left(2\beta x\right)^{2\nu+1}}{\left(n+\nu+\frac{1}{2}\right)^{2\nu+3}\Gamma(2\nu+1)} \\ \times e^{2\mu x-\frac{\mu n}{n+\nu+\frac{1}{2}}x-\lambda_n t}L^{2\nu}_{n}\left(\frac{2 \beta x}{n+\nu+\frac{1}{2}}\right) &,\, (\nu, \mu) \in \langle -1/2, +\infty \rangle \times \langle-\infty, 0 \rangle\\
	\pi(x) &,\, (\nu, \mu) \in \langle -1, -1/2 \rangle \times \langle-\infty, 0 \rangle\\
	0 &,\, (\nu, \mu) \in \langle -1/2, +\infty \rangle \times \langle 0, +\infty \rangle\\
	\sum\limits_{n=1}^{\infty}\frac{\beta\left(2\beta x\right)^{2\nu+1}}{\left(n+\nu+\frac{1}{2}\right)^{2\nu+3}\Gamma(2\nu+1)}\\ \times e^{2\mu x-\frac{\mu n}{n+\nu+\frac{1}{2}}x-\lambda_n t} L^{2\nu}_{n}\left(\frac{2 \beta x}{n+\nu+\frac{1}{2}}\right) &,\, (\nu, \mu) \in \langle -1, -1/2 \rangle \times \langle 0, +\infty \rangle.
\end{cases}
\]
and
\begin{align*}
	p_{c}(x,t;0)=&\frac{1}{2\pi}\int\limits_{\mu^2/2}^{\infty}2^{\nu+1/2}k(\lambda)^{\nu - 1/2}e^{i\pi (\nu/2+1/4)}\left|\frac{\Gamma(\frac{1}{2}+\nu+i\beta/ k(\lambda))}{\Gamma(1+2\nu)}\right |^2 e^{-\lambda t}x^{\nu+\frac{1}{2}}e^{\mu x+\pi \beta /k(\lambda)} \\
	&\times M_{i\beta/ k(\lambda), \, \nu}(-2ik(\lambda) x)\, d\lambda,
\end{align*} for $y=0$,
where $k(\lambda)=\sqrt{2\lambda-\mu^2}$
(for details and the proof see Proposition 1., \cite{linetsky_2004}).

\section{Fractional Bessel process}\label{frac_Bessel_process}
Let \( (D(t),\, t \geq 0) \) be a standard stable subordinator with index \( 0 < \alpha < 1 \). This is a non-decreasing Lévy process characterized by the Laplace transform
\begin{equation*}
\mathbb{E}[e^{-s D(t)}] = \exp(-t s^{\alpha}), \quad s \geq 0.
\end{equation*}
We define the inverse subordinator (also known as the first passage or hitting time process) by
\begin{equation}\label{InverseStabDef}
	E(t) = \inf \{ x > 0 : D(x) > t \}, \quad t \geq 0.
\end{equation}
The process $(E(t), \,t \geq 0)$ is non-Markovian and non-decreasing. For every fixed \( t \geq 0 \), the random variable \( E(t) \) has a probability density function \( f_t(x) \) whose Laplace transform is given by
\begin{equation} \label{Mittag}
	\mathbb{E}[e^{-s E(t)}] = \int_{0}^{\infty} e^{-s x} f_t(x)\, dx = \mathcal{E}_{\alpha}(-s t^{\alpha}),
\end{equation}
where
\begin{equation} \label{Mittag_Leffler_function_definition}
	\mathcal{E}_{\alpha}(z) := \sum_{j=0}^{\infty} \frac{z^j}{\Gamma(1 + \alpha j)}, \quad z \in \mathbb{C},
\end{equation}
is the Mittag-Leffler function, which converges absolutely for all complex $z$ (see, e.g., \cite{gorenflo2016mittag}).


The fractional Bessel process is defined as the time-changed Bessel process via the inverse of the stable subordinator:
\begin{equation}\label{fracBesselprocess}
	X_{\alpha}(t):=X(E(t)),
\end{equation}
where $(X(t), \, t \geq 0)$ is the Bessel process corresponding to the SDE \eqref{Bessel_SDE} and $(E(t), \, t \geq 0)$ is the inverse $\alpha$-stable subordinator \eqref{InverseStabDef}, assumed to be independent of the Bessel process. We emphasize that the resulting process is non-Markovian, but nevertheless the function $p_{\alpha}(x;t,y)$ such that
\begin{equation*}
	P(X_{\alpha}(t) \in B | X_{\alpha}(0)=y)=\int_{B}p_{\alpha}(x,t;y)dx, \text{ for any Borel subset B of } \left[0,+\infty\right>
\end{equation*}
will be referred to as transition density of the fractional Bessel process.
\subsection{Spectral representation of transition density of the fractional Bessel process}
In order to prove the main result, we first state and prove two technical lemmas.
\begin{lemma}\label{Lemma1FBP}
	Let $\nu \in \langle -1, +\infty \rangle \backslash \{-\frac{1}{2}\}, \, \mu \in \R$ and $\beta$ given by \eqref{beta_def}, with $\mathcal{E}_{\alpha}(\cdot)$ denoting Mittag-Leffler function and $L^{2\nu}_{n}(\cdot)$ the generalized Laguerre polynomial. Then for $x \geq 0, y >0$
\begin{equation*}
		\sum\limits_{n=1}^{\infty}\frac{n!\beta\left(2\beta x\right)^{2\nu+1}}{\left(n+\nu+\frac{1}{2}\right)^{2\nu+3}\Gamma(2\nu+n+1)}e^{2\mu x-\frac{\mu n}{n+\nu+\frac{1}{2}}(x+y)}\mathcal{E}_{\alpha}(-\lambda_n t^{\alpha})\left|L^{2\nu}_{n}\left(\frac{2 \beta x}{n+\nu+\frac{1}{2}}\right) L^{2\nu}_{n}\left(\frac{2 \beta y}{n+\nu+\frac{1}{2}}\right)\right| < \infty,
\end{equation*}
while for $x \geq 0$
\begin{equation*}
	\sum\limits_{n=1}^{\infty}\frac{\beta\left(2\beta x\right)^{2\nu+1}}{\left(n+\nu+\frac{1}{2}\right)^{2\nu+3}\Gamma(2\nu+1)}e^{2\mu x-\frac{\mu n}{n+\nu+\frac{1}{2}}x}\mathcal{E}_{\alpha}(-\lambda_n t^{\alpha})\left|L^{2\nu}_{n}\left(\frac{2 \beta x}{n+\nu+\frac{1}{2}}\right) \right| < \infty.
\end{equation*}
\end{lemma}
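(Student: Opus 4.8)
The plan is to reduce both inequalities to the convergence of the analogous series \emph{with the Mittag--Leffler factor deleted}, since that factor contributes nothing beyond the trivial estimate $0<\mathcal{E}_\alpha(-\lambda_n t^\alpha)\le 1$. Indeed, by the Laplace-transform identity \eqref{Mittag} one has $\mathcal{E}_\alpha(-\lambda_n t^\alpha)=\mathbb{E}[e^{-\lambda_n E(t)}]$, and since $E(t)\ge0$ and $\lambda_n>0$ for $n\ge1$ this lies in $(0,1]$. Hence it suffices to prove that $\sum_{n\ge1}b_n<\infty$ and $\sum_{n\ge1}b_n'<\infty$, where $b_n$ (resp.\ $b_n'$) denotes the $n$-th summand of the first (resp.\ second) series with the factor $\mathcal{E}_\alpha(-\lambda_n t^\alpha)$ removed.

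Next I would estimate the coefficients, keeping $x,y,\mu,\nu$ fixed. The quantities $e^{2\mu x}$, $(2\beta x)^{2\nu+1}$ and $\beta$ are constants in $n$; the factor $\exp\big(-\tfrac{\mu n}{n+\nu+1/2}(x+y)\big)$ is bounded in $n$ because $n/(n+\nu+\tfrac12)$ stays in a compact subset of $(0,\infty)$ for $n\ge1$ (this is where $\nu>-1$ enters); for the first series Stirling's formula gives $\tfrac{n!}{\Gamma(2\nu+n+1)}=\tfrac{\Gamma(n+1)}{\Gamma(n+2\nu+1)}=O(n^{-2\nu})$, while in the second series this ratio is absent (replaced by the constant $1/\Gamma(2\nu+1)$); and $(n+\nu+\tfrac12)^{-(2\nu+3)}=O(n^{-(2\nu+3)})$ since $2\nu+3>1$. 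Thus $b_n$ equals its two Laguerre factors times a quantity that is $O(n^{-(4\nu+3)})$, and $b_n'$ equals its single Laguerre factor times a quantity that is $O(n^{-(2\nu+3)})$.

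For the Laguerre factor, write $\tfrac{2\beta x}{n+\nu+1/2}=\tfrac{w_n}{n}$ with $w_n\to2\beta x$, so the $w_n$ remain in a fixed compact set. The Mehler--Heine asymptotics $n^{-2\nu}L_n^{2\nu}(w/n)\to(2\beta x)^{-\nu}J_{2\nu}(2\sqrt{2\beta x})$, valid uniformly on compacts with the limit extending to an entire (hence locally bounded) function of $w$, give $|L_n^{2\nu}(\tfrac{2\beta x}{n+\nu+1/2})|=O(n^{2\nu})$, and likewise at $y$. Multiplying the pieces, $b_n=O\big(n^{-(4\nu+3)}\cdot n^{2\nu}\cdot n^{2\nu}\big)=O(n^{-3})$ and $b_n'=O\big(n^{-(2\nu+3)}\cdot n^{2\nu}\big)=O(n^{-3})$, both summable, which completes the argument.

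The only slightly delicate point --- and the expected main obstacle --- is the \emph{uniform} control of $L_n^{2\nu}$ at an $n$-dependent argument of order $1/n$: the elementary bound $|L_n^{2\nu}(\xi)|\le\binom{n+2\nu}{n}e^{\xi/2}$ is immediate only for $\nu\ge0$, so for $\nu\in(-1,0)$ one really needs the uniform Mehler--Heine estimate (or an equivalent Bessel comparison). A convenient shortcut avoids this issue altogether: by Proposition~1 of \cite{linetsky_2004} the series $\sum_n b_n e^{-\lambda_n t}$ and $\sum_n b_n'e^{-\lambda_n t}$ converge absolutely, and since $\lambda_n\uparrow\mu^2/2$ forces $e^{-\lambda_n t}\ge e^{-\mu^2 t/2}>0$, the series $\sum_n b_n$ and $\sum_n b_n'$ converge as well; the lemma then follows from $\mathcal{E}_\alpha(-\lambda_n t^\alpha)\le1$ alone.
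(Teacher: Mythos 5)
Your argument is correct and follows essentially the same route as the paper: bound $\mathcal{E}_{\alpha}(-\lambda_n t^{\alpha})$ by a constant, use the Mehler--Heine-type asymptotic $L_n^{2\nu}$ at an argument of order $1/n$ being $O(n^{2\nu})$ together with Stirling for the ratio $n!/\Gamma(2\nu+n+1)$, and conclude the general term is $O(n^{-3})$. Your closing shortcut (absolute convergence of Linetsky's spectral series plus $e^{-\lambda_n t}\ge e^{-\mu^2 t/2}$ since $\lambda_n<\mu^2/2$) is a valid alternative the paper does not use, but the main estimate coincides with the paper's own proof.
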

\begin{proof}
	In order to prove the convergence of the series, it is sufficient to show that for large values of $n$, the terms are bounded by those of a convergent series.
	
	According to \cite{OrthogonalFunctions} (p. 348 expression (36)), we have
	\begin{equation*}
		L^{2\nu}_{n}\left(\frac{2 \beta x}{n+\nu+\frac{1}{2}}\right) \thicksim O\left(\left(2\beta x\right)^{-1/4-\nu}n^{2\nu}\right), \quad n \to \infty.
	\end{equation*}
On the other hand, the sequence $\left(\lambda_n, \, n \in \N \right)$, where $\lambda_n$ is given by \eqref{Bessel_eigenvalue}, is an increasing sequence of eigenvalues such that
\begin{equation*}
	\lim_{n \to \infty}\lambda_n=\frac{\mu^2}{2}.
\end{equation*}
 Therefore, the Mittag-Leffler function $\mathcal{E}_{\alpha}(-\lambda_n t^{\alpha})$, which is a decreasing and bounded function (with respect to $n$), is also bounded for large values of $n$.
 
 Now using the asymptotic relation
 \begin{equation*}
 	\Gamma(2\nu+n+1) \thicksim \Gamma(n)\cdot n^{2\nu+1}, \quad n \to \infty
 \end{equation*}
we obtain
\begin{equation*}
	\frac{n!\beta\left(2\beta x\right)^{2\nu+1}}{\left(n+\nu+\frac{1}{2}\right)^{2\nu+3}\Gamma(2\nu+n+1)}e^{2\mu x-\frac{\mu n}{n+\nu+\frac{1}{2}}(x+y)} \thicksim O\left(\beta \frac{ \left(2\beta x\right)^{2\nu+1}}{n^{4\nu+3}}e^{\mu (x-y)}\right), \quad n \to \infty.
\end{equation*}
Taking all of the above estimates into account, we conclude

\begin{align*}
\frac{n!\beta\left(2\beta x\right)^{2\nu+1}}{\left(n+\nu+\frac{1}{2}\right)^{2\nu+3}\Gamma(2\nu+n+1)}e^{2\mu x-\frac{\mu n}{n+\nu+\frac{1}{2}}(x+y)}\mathcal{E}_{\alpha}(-\lambda_n t^{\alpha})\left|L^{2\nu}_{n}\left(\frac{2 \beta x}{n+\nu+\frac{1}{2}}\right) L^{2\nu}_{n}\left(\frac{2 \beta y}{n+\nu+\frac{1}{2}}\right)\right| \leq \frac{c_{x,y,\mu, \nu }}{n^3},	
\end{align*}	
and 
\begin{align*}
	\frac{\beta\left(2\beta x\right)^{2\nu+1}}{\left(n+\nu+\frac{1}{2}\right)^{2\nu+3}\Gamma(2\nu+1)}e^{2\mu x-\frac{\mu n}{n+\nu+\frac{1}{2}}x}\mathcal{E}_{\alpha}(-\lambda_n t^{\alpha})\left|L^{2\nu}_{n}\left(\frac{2 \beta x}{n+\nu+\frac{1}{2}}\right) \right| < \frac{c_{x,\mu, \nu}}{n^{2\nu+3}},
\end{align*}	
where $c_{x,y,\mu, \nu }$ and $c_{x,\mu, \nu }$  are some positive constants depending on $x, y, \mu$ and $\nu$. 

This verifies the convergence of the original series since $\nu>-1$.
\end{proof}
\begin{lemma}\label{Lemma2FBP}
Let $\nu \in \langle -1, +\infty \rangle \backslash \{-\frac{1}{2}\}, \, \mu \in \R$ and $\beta$ given by \eqref{beta_def}, with $\mathcal{E}_{\alpha}(\cdot)$ denoting Mittag-Leffler function and $M_{\chi, \mu}(\cdot)$ the Whittaker function. Then, for $x \geq 0$ and $y>0$
\begin{align*}
	\int\limits_{\mu^2/2}^{\infty}&\frac{1}{k(\lambda)}\left|\frac{\Gamma(\frac{1}{2}+\nu+i\beta/ k(\lambda))}{\Gamma(1+2\nu)}\right |^2 \mathcal{E}_{\alpha}\left(-\lambda t^{\alpha}\right)\left(\frac{x}{y}\right)^{\nu+\frac{1}{2}}e^{\mu(x-y)+\pi \beta /k(\lambda)} \\
	\times&\left|
 M_{i\beta/ k(\lambda),\, \nu}(-2ik(\lambda) y) M_{-i\beta/ k(\lambda), \, \nu}(2ik(\lambda) x)\right|\, d\lambda < +\infty,
\end{align*}
while for $x \geq 0$ and $-1 <\nu <1/2$,
\begin{equation*}
	\int\limits_{\mu^2/2}^{\infty}k(\lambda)^{\nu-1/2}\left|\frac{\Gamma(\frac{1}{2}+\nu+i\beta/ k(\lambda))}{\Gamma(1+2\nu)}\right |^2 \mathcal{E}_{\alpha}\left(-\lambda t^{\alpha}\right)x^{\nu+\frac{1}{2}}e^{\mu(x-y)+\pi \beta /k(\lambda)} 
	\left| M_{i\beta/ k(\lambda), \, \nu}(-2ik(\lambda) x)\right|\, d\lambda < +\infty.
\end{equation*}
\end{lemma}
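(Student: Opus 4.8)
The plan is to split the integration range at an interior point, say $\lambda=\mu^2$, and treat the neighbourhood of the left endpoint $\lambda=\mu^2/2$ and the tail $\lambda\to\infty$ by different arguments. For $i\in\{1,2\}$ I would write the $i$-th integrand as $\Phi_i(\lambda)\,\mathcal{E}_{\alpha}(-\lambda t^{\alpha})$, where $\Phi_i(\lambda)\ge 0$ collects all $\lambda$-dependent factors except the Mittag-Leffler function; in particular $\Phi_i$ already carries the moduli of the Whittaker functions, using $|M_{i\beta/k(\lambda),\nu}(\cdot)\,M_{-i\beta/k(\lambda),\nu}(\cdot)|=|M_{i\beta/k(\lambda),\nu}(\cdot)|\,|M_{-i\beta/k(\lambda),\nu}(\cdot)|$. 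Compared with the non-fractional case of \cite{linetsky_2004}, the only change is that the weight $e^{-\lambda t}$ is replaced by $\mathcal{E}_{\alpha}(-\lambda t^{\alpha})$, which decays only polynomially rather than exponentially; hence all the genuine work sits in the tail, while near the endpoint one can simply borrow Linetsky's estimate.

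Near the endpoint I would quote Proposition~1 of \cite{linetsky_2004}: on $[\mu^2/2,\mu^2]$ one has $\mathcal{E}_{\alpha}(-\lambda t^{\alpha})\le\mathcal{E}_{\alpha}(0)=1$, and since the continuous-spectrum integral there converges absolutely for every $t>0$ while $e^{-\lambda t}\ge e^{-\mu^2 t}>0$ on that interval, it follows that $\int_{\mu^2/2}^{\mu^2}\Phi_i(\lambda)\,d\lambda\le e^{\mu^2 t}\int_{\mu^2/2}^{\infty}\Phi_i(\lambda)e^{-\lambda t}\,d\lambda<\infty$. (A self-contained check is possible: as $k(\lambda)=\sqrt{2\lambda-\mu^2}\to 0$ the Whittaker function has a small-argument, large-imaginary-order Bessel-type limit giving $|M_{i\beta/k(\lambda),\nu}(-2ik(\lambda)x)|=O(k(\lambda)^{\nu+1/2})$, while $|\Gamma(1/2+\nu+i\beta/k(\lambda))|^2 e^{\pi\beta/k(\lambda)}=O(k(\lambda)^{-2\nu})$ by Stirling, and the powers of $k(\lambda)$ combine to an integrable bound; but invoking \cite{linetsky_2004} is shorter.)

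For the tail I would use the standard bound $\mathcal{E}_{\alpha}(-u)\le C_{\alpha}/(1+u)$ for $u\ge 0$ (see \cite{gorenflo2016mittag}), so $\mathcal{E}_{\alpha}(-\lambda t^{\alpha})\le C_{\alpha,t}\,\lambda^{-1}$ on $[\mu^2,\infty)$, and then estimate $\Phi_i(\lambda)$ as $\lambda\to\infty$. There $k(\lambda)\to\infty$, hence $i\beta/k(\lambda)\to 0$, so $|\Gamma(1/2+\nu+i\beta/k(\lambda))/\Gamma(1+2\nu)|^2$ and $e^{\pi\beta/k(\lambda)}$ are bounded; and the Whittaker functions remain bounded along the imaginary axis, because for fixed $\xi\in\{x,y\}$ and $z=\mp 2ik(\lambda)\xi$ the exponentials $e^{\pm z/2}$ have modulus one and the prefactor $z^{\nu+1/2}$ cancels the $|z|^{-\nu-1/2}$ coming from the two-term large-$|z|$ asymptotics of the confluent hypergeometric function on this anti-Stokes line, so $|M_{\pm i\beta/k(\lambda),\nu}(\mp 2ik(\lambda)\xi)|=O(1)$ (cf.\ \cite{Buchholz}). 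Thus $\Phi_1(\lambda)=O(k(\lambda)^{-1})=O(\lambda^{-1/2})$ and $\Phi_2(\lambda)=O(k(\lambda)^{\nu-1/2})=O(\lambda^{(\nu-1/2)/2})$, giving
\begin{align*}
\int_{\mu^2}^{\infty}\Phi_1(\lambda)\,\mathcal{E}_{\alpha}(-\lambda t^{\alpha})\,d\lambda&\le C\int_{\mu^2}^{\infty}\lambda^{-3/2}\,d\lambda<\infty,\\
\int_{\mu^2}^{\infty}\Phi_2(\lambda)\,\mathcal{E}_{\alpha}(-\lambda t^{\alpha})\,d\lambda&\le C\int_{\mu^2}^{\infty}\lambda^{(\nu-1/2)/2-1}\,d\lambda<\infty,
\end{align*}
the second being finite precisely because $(\nu-1/2)/2-1<-1$, i.e.\ $\nu<1/2$, which is exactly the hypothesis; adding the two regimes proves both claims.

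The step I expect to be the main obstacle is the uniform boundedness of the Whittaker functions along the imaginary axis for large argument: one must use the full two-term large-argument expansion of $M(a;b;\cdot)$ on the line $\arg z=\pm\pi/2$ (rather than the single dominant term valid in the open sectors) and verify that the $O(1/z)$ remainders are uniform in the vanishing parameter $\chi=i\beta/k(\lambda)$. A cleaner alternative to the whole proof, avoiding the endpoint/tail split, would be to write $\mathcal{E}_{\alpha}(-\lambda t^{\alpha})=\int_{0}^{\infty}e^{-\lambda s}f_t(s)\,ds$, apply Tonelli to get $\int_{0}^{\infty}f_t(s)\,G_i(s)\,ds$ with $G_i(s)=\int_{\mu^2/2}^{\infty}\Phi_i(\lambda)e^{-\lambda s}\,d\lambda$, bound $G_i(s)=O(1+s^{-a_i})$ via the same tail estimate with $a_1=1/2$ and $a_2=(\nu+3/2)/2$, and conclude from $\int_{0}^{\infty}f_t(s)\,s^{-a}\,ds=\mathbb{E}[E(t)^{-a}]=\Gamma(1-a)\,t^{-\alpha a}/\Gamma(1-\alpha a)<\infty$, which holds iff $a<1$, i.e.\ once again $\nu<1/2$ (see e.g.\ \cite{meerschaert2019stochastic} for the moment formula).
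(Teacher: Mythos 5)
Your proof is correct and follows essentially the same route as the paper: continuity of the integrand away from the endpoints, small-$k(\lambda)$ asymptotics of the Whittaker and Gamma factors near $\lambda=\mu^2/2$ (your parenthetical self-contained check is exactly the paper's argument, and is safer than invoking Proposition~1 of \cite{linetsky_2004}, which gives convergence of the spectral integral rather than convergence with the moduli inserted), and boundedness of the Whittaker functions combined with the $O(\lambda^{-1})$ decay of $\mathcal{E}_{\alpha}(-\lambda t^{\alpha})$ in the tail, yielding an $O(\lambda^{-3/2})$ integrand. One genuine improvement: your explicit tail estimate for the second integral, showing that convergence requires $(\nu-1/2)/2-1<-1$, i.e.\ $\nu<1/2$, supplies the step the paper omits ("the proof for the second integral \ldots follows a similar reasoning") and makes transparent where the parameter restriction highlighted in the remark after the lemma actually enters.
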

\begin{proof}
Let 
\begin{small}\begin{equation*}
	h(\lambda)=	\frac{1}{k(\lambda)}\left|\frac{\Gamma(\frac{1}{2}+\nu+i\beta/ k(\lambda))}{\Gamma(1+2\nu)}\right |^2 \mathcal{E}_{\alpha}\left(-\lambda t^{\alpha}\right)\left(\frac{x}{y}\right)^{\nu+\frac{1}{2}}e^{\mu(x-y)+\pi \beta /k(\lambda)} \left|
	M_{i\beta/ k(\lambda),\, \nu}(-2ik(\lambda) y) M_{-i\beta/ k(\lambda), \, \nu}(2ik(\lambda) x)\right|.
\end{equation*}
\end{small}
According to \cite{NIST} (p. 335), the regularized Whittaker function $M_{\chi, \nu}(z)/\Gamma(1+2\nu)$ is an entire function in both parameters $\chi$ and $\nu$,
Therefore, the subintegral function $h(\lambda)$ is also entire.
Since 
\begin{equation*}
	\lim_{\lambda \to \mu^2/2} k(\lambda)=0, \quad \lim_{\lambda \to +\infty} k(\lambda)=+\infty
\end{equation*}
to complete the proof we must verify that $h(\lambda)$ is bounded by an integrable function in the neighborhood of the endpoints $\mu^2/2$ and $+\infty$. 
According to \cite{Buchholz} (p. 100, expression (21a)),
\begin{align*}
	\left| M_{i\beta/ k(\lambda),\, \nu}(-2ik(\lambda)y)\right | \thicksim c_{\beta, \nu, y} \cdot k(\lambda)^{\nu+\frac{1}{2}}, \quad \lambda \to \mu^2/2, \\
	\left| M_{-i\beta/ k(\lambda),\, \nu}(2ik(\lambda)x)\right | \thicksim c_{\beta, \nu, x} \cdot k(\lambda)^{\nu+\frac{1}{2}}, \quad \lambda \to \mu^2/2,
\end{align*}
where $c_{\beta, \nu, x}$ and $c_{\beta, \nu, y}$ are positive constants depending on $\beta, \nu$, $x$ and $y$.
On the other hand (see \cite{NIST} (p. 141)),
\begin{equation}
	\left|\Gamma\left(\frac{1}{2}+\nu+i\beta/ k(\lambda)\right)\right|^2 \thicksim 2\pi \left(\beta/k(\lambda)\right)^{2\nu} e^{-\pi \left|\beta\right|/k(\lambda)}, \quad \lambda \to \mu^2/2.
\end{equation}
Since the Mittag-Leffler function $\lambda \mapsto \mathcal{E}_{\alpha}(-\lambda t^{\alpha})$ is monotone decreasing and bounded (in fact, $\mathcal{E}_{\alpha}(0)=1$),
it follows that 
\begin{itemize}
\item For $\beta \geq 0$: $h(\lambda) \thicksim O(c), \,c \in \R, \, \lambda \to \mu^2/2$
\item For $\beta <0$: $h(\lambda) \thicksim O(e^{-\pi \left|\beta\right|/k(\lambda)}), \, \lambda \to \mu^2/2$.
\end{itemize}
In both cases, $h(\lambda)$ is dominated by an integrable function near the lower limit.

For the upper endpoint $\lambda \to +\infty $, the following asymptotic expansion holds for the Whittaker function (see \cite{Buchholz}, p. 92, expression (4)):
\begin{equation*}
	\left|  M_{i\beta/ k(\lambda),\, \nu}(-2ik(\lambda)y) \right|  \leq \left|\frac{e^{\frac{\pi}{2}\frac{\beta}{k(\lambda)}+i\left(\frac{1}{2}+\nu\right)}}{\Gamma\left(\frac{1}{2}+\nu+i \beta/k(\lambda)\right)}\right| \sum_{l=0}^{N}\frac{\left(\frac{\beta}{2 k(\lambda)^2y}\right)^l}{l!}\cdot \left(\prod_{r=1}^{l}\sin\rho^{(+)}_r \sin\rho^{(-)}_r\right)^{-1}, \lambda \to +\infty,
\end{equation*}
where
\begin{equation*}
	\tan{\rho^{(\pm)}_r}=\frac{\beta}{k(\lambda)(r-1/2\pm \nu)}, \quad \beta >0.
\end{equation*}
Using the identity
\begin{equation*}
	\left|\sin{\rho^{(\pm)}_r}\right|=\left|\frac{\tan{\rho^{(\pm)}_r}}{\sqrt{1+\tan{\rho^{(\pm)}_r}^2}}\right|	
\end{equation*}
we find
\begin{equation*}
\left|\left(\prod_{r=1}^{l}\sin\rho^{(+)}_r \sin\rho^{(-)}_r\right)^{-1}\right| \thicksim O(k(\lambda)^{2l}), \quad \lambda \to \infty.
\end{equation*}
Hence, since $k(\lambda) \to \infty$ as $\lambda \to \infty$,
\begin{equation*}
	\left|  M_{i\beta/ k(\lambda),\, \nu}(-2ik(\lambda)y) \right| \thicksim O(b_{\beta, \nu, y}), \quad \lambda \to \infty,
\end{equation*}
where $b_{\beta, \nu, x}$ is a positive constant depending on $\beta, \nu$ and $x$.  This asymptotic estimate is valid for $\beta > 0$, but using the symmetry of the Whittaker function in its first parameter,
\begin{equation*}
	M_{i\beta/ k(\lambda),\, \nu}(-2ik(\lambda)y)=e^{-i\pi(\nu+1/2)}M_{-i\beta/ k(\lambda),\, \nu}(2ik(\lambda)y)
\end{equation*}
we conclude that for $\beta \in \R$
\begin{equation*}
	\left|  M_{i\beta/ k(\lambda),\, \nu}(-2ik(\lambda)y) M_{-i\beta/ k(\lambda),\, \nu}(2ik(\lambda)x) \right| \thicksim O(b_{\beta, \nu, x,y}), \quad \lambda \to \infty.
\end{equation*}
From \cite{Mittag2011} (expression (6.5))
\begin{equation*}
	\mathcal{E}_{\alpha}(-\lambda t^{\alpha}) \thicksim \frac{1}{\Gamma(1-\alpha)\lambda t^{\alpha}}, \quad \lambda \to +\infty
\end{equation*}
we deduce
\begin{equation*}
	\left |h(\lambda)\right| \thicksim O(\lambda^{-\frac{3}{2}}), \quad \lambda \to +\infty.
\end{equation*}
This completes the proof for the first integral, while the proof for the second integral is omitted as it follows a similar reasoning.
\end{proof}
\begin{remark}
	The restriction on the parameter $\nu$ is essential to establish the convergence of the second integral. Consequently, for the fractional Bessel process to start at the initial point $y=0$, the parameter $\nu$ must lie within the range $\langle -1, 1/2 \rangle  \backslash \{-1/2\}$.
\end{remark}
Now we are ready to state and prove our main result.
\begin{theorem}\label{fracBesselprocess_trans_theorem}
	Spectral representation of the transition density of the fractional Bessel process \eqref{fracBesselprocess} is given by
	\begin{equation} \label{fracBesselprocess_trans}
		p_{\alpha}(x,t;y)=p_{\alpha, d}(x,t;y)+p_{\alpha, c}(x,t;y),
	\end{equation}
	where
	\[
	p_{\alpha, d}(x,t;y)=
	\begin{cases}
		\pi(x)+\sum\limits_{n=1}^{\infty}\frac{n!\beta\left(2\beta x\right)^{2\nu+1}}{\left(n+\nu+\frac{1}{2}\right)^{2\nu+3}\Gamma(2\nu+n+1)}e^{2\mu x-\frac{\mu n}{n+\nu+\frac{1}{2}}(x+y)} \\ \times \mathcal{E}_{\alpha}(-\lambda_n t^{\alpha})L^{2\nu}_{n}\left(\frac{2 \beta x}{n+\nu+\frac{1}{2}}\right)L^{2\nu}_{n}\left(\frac{2 \beta y}{n+\nu+\frac{1}{2}}\right) &,\, (\nu, \mu) \in \langle -1/2, +\infty \rangle \times \langle-\infty, 0 \rangle,\\
		\pi(x) &,\, (\nu, \mu) \in \langle -1, -1/2 \rangle \times \langle-\infty, 0 \rangle\\
		0 &,\, (\nu, \mu) \in \langle -1/2, +\infty \rangle \times \langle 0, +\infty \rangle\\
		\sum\limits_{n=1}^{\infty}\frac{n!\beta\left(2\beta x\right)^{2\nu+1}}{\left(n+\nu+\frac{1}{2}\right)^{2\nu+3}\Gamma(2\nu+n+1)}e^{2\mu x-\frac{\mu n}{n+\nu+\frac{1}{2}}(x+y)}\\ \times\mathcal{E}_{\alpha}(-\lambda_n t^{\alpha})L^{2\nu}_{n}\left(\frac{2 \beta x}{n+\nu+\frac{1}{2}}\right)L^{2\nu}_{n}\left(\frac{2 \beta y}{n+\nu+\frac{1}{2}}\right) &,\, (\nu, \mu) \in \langle -1, -1/2 \rangle \times \langle 0, +\infty \rangle.
	\end{cases}
	\]for $x \geq 0, y>0$, 
	
	and
		\[
	p_{\alpha, d}(x,t;0)=
	\begin{cases}
		\pi(x)+\sum\limits_{n=1}^{\infty}\frac{\beta\left(2\beta x\right)^{2\nu+1}}{\left(n+\nu+\frac{1}{2}\right)^{2\nu+3}\Gamma(2\nu+1)}e^{2\mu x-\frac{\mu n}{n+\nu+\frac{1}{2}}x} \\ \times \mathcal{E}_{\alpha}(-\lambda_n t^{\alpha})L^{2\nu}_{n}\left(\frac{2 \beta x}{n+\nu+\frac{1}{2}}\right) &,\, (\nu, \mu) \in \langle -1/2, 1/2 \rangle \times \langle-\infty, 0 \rangle,\\
		\pi(x) &,\, (\nu, \mu) \in \langle -1, -1/2 \rangle \times \langle-\infty, 0 \rangle\\
		0 &,\, (\nu, \mu) \in \langle -1/2, 1/2 \rangle \times \langle 0, +\infty \rangle\\
		\sum\limits_{n=1}^{\infty}\frac{\beta\left(2\beta x\right)^{2\nu+1}}{\left(n+\nu+\frac{1}{2}\right)^{2\nu+3}\Gamma(2\nu+1)}e^{2\mu x-\frac{\mu n}{n+\nu+\frac{1}{2}}x}\\ \times\mathcal{E}_{\alpha}(-\lambda_n t^{\alpha})L^{2\nu}_{n}\left(\frac{2 \beta x}{n+\nu+\frac{1}{2}}\right) &,\, (\nu, \mu) \in \langle -1, -1/2 \rangle \times \langle 0, +\infty \rangle.
	\end{cases}
	\]for $x \geq 0$,
	
	while
	\begin{align*}
		p_{\alpha, c}(x,t;y)=&\frac{1}{2\pi}\int\limits_{\mu^2/2}^{\infty}\frac{1}{k(\lambda)}\left|\frac{\Gamma(\frac{1}{2}+\nu+i\beta/ k(\lambda))}{\Gamma(1+2\nu)}\right |^2 \mathcal{E}_{\alpha}\left(-\lambda t^{\alpha}\right)\left(\frac{x}{y}\right)^{\nu+\frac{1}{2}}e^{\mu(x-y)+\pi \beta /k(\lambda)} \\
		 &\times M_{i\beta/ k(\lambda),\, \nu}(-2ik(\lambda) y) M_{-i\beta/ k(\lambda), \, \nu}(2ik(\lambda) x)\, d\lambda,
	\end{align*}
for $x \geq 0, y>0$

and

	\begin{align*}
	p_{\alpha, c}(x,t;0)=&\frac{1}{2\pi}\int\limits_{\mu^2/2}^{\infty}2^{\nu+1/2}k(\lambda)^{\nu - 1/2}e^{i\pi (\nu/2+1/4)}\left|\frac{\Gamma(\frac{1}{2}+\nu+i\beta/ k(\lambda))}{\Gamma(1+2\nu)}\right |^2 \mathcal{E}_{\alpha}\left(-\lambda t^{\alpha}\right)x^{\nu+\frac{1}{2}}e^{\mu x+\pi \beta /k(\lambda)} \\
	&\times M_{i\beta/ k(\lambda), \, \nu}(-2ik(\lambda) x)\, d\lambda,
\end{align*}
for $x \geq 0$ and $\nu \in \langle -1, 1/2 \rangle  \backslash \{-1/2\}$,
where $k(\lambda)=\sqrt{2\lambda-\mu^2}$, $\pi(\cdot)$ is given by \eqref{stat_dist_Bessel}, $L^{2\nu}_n(\cdot)$ is the generalized Laguerre polynomial, $\mathcal{E}_{\alpha}\left(\cdot \right)$ is the Mittag-Leffler function and $M_{\chi, \mu}(\cdot)$ is the Whittaker function.
\end{theorem}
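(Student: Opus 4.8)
The plan is to reduce the statement to Linetsky's spectral representation \eqref{Bessel_trans_dens} of the non-fractional transition density by means of the subordination identity, and then to integrate the discrete spectral series and the continuous spectral integral term by term against the law of $E(t)$, the interchange being legitimized by Lemmas \ref{Lemma1FBP} and \ref{Lemma2FBP}. First I would record the subordination formula. Since $E(0)=0$ almost surely, $X_\alpha(0)=X(E(0))=X(0)$, so conditioning on $X_\alpha(0)=y$ is the same as conditioning on $X(0)=y$. Because $(E(t),\,t\ge 0)$ is independent of $(X(t),\,t\ge 0)$ and $E(t)$ has density $f_t$, conditioning on the value of $E(t)$ gives, for every Borel set $B\subseteq[0,+\infty)$,
\[
P(X_\alpha(t)\in B\mid X_\alpha(0)=y)=\int_0^\infty P(X(s)\in B\mid X(0)=y)\,f_t(s)\,ds,
\]
and, writing $P(X(s)\in B\mid X(0)=y)=\int_B p(x,s;y)\,dx$ and using $p\ge 0$ to exchange the two integrations (Tonelli), this yields $p_\alpha(x,t;y)=\int_0^\infty p(x,s;y)\,f_t(s)\,ds$. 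Thus the theorem follows once we substitute $p(x,s;y)=p_d(x,s;y)+p_c(x,s;y)$ from \eqref{Bessel_trans_dens} and carry the integral $\int_0^\infty(\cdot)f_t(s)\,ds$ inside the sum over $n$ and the integral over $\lambda$.

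The arithmetic of that interchange is governed by the single identity \eqref{Mittag}: for every $\lambda\ge 0$, $\int_0^\infty e^{-\lambda s}f_t(s)\,ds=\mathcal{E}_\alpha(-\lambda t^\alpha)$. Granting the interchange, each discrete mode $e^{-\lambda_n s}$ appearing in $p_d$ is converted to $\mathcal{E}_\alpha(-\lambda_n t^\alpha)$, each continuous mode $e^{-\lambda s}$ in $p_c$ to $\mathcal{E}_\alpha(-\lambda t^\alpha)$, and the stationary term $\pi(x)$, which does not depend on $s$, is reproduced unchanged because $f_t$ integrates to $1$; this is consistent with $\lambda_0=0$ and $\mathcal{E}_\alpha(0)=1$, and it reproduces exactly the expressions for $p_{\alpha,d}$ and $p_{\alpha,c}$ claimed in \eqref{fracBesselprocess_trans}.

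It remains to justify the interchange, which is the only real content. For the discrete part I would apply Fubini's theorem after checking absolute summability: by Tonelli and \eqref{Mittag}, $\int_0^\infty\sum_{n\ge 1}(\mathrm{coeff}_n)\,e^{-\lambda_n s}\,f_t(s)\,ds=\sum_{n\ge 1}(\mathrm{coeff}_n)\,\mathcal{E}_\alpha(-\lambda_n t^\alpha)$, and the finiteness of the right-hand side, with absolute values on the Laguerre factors, is precisely Lemma \ref{Lemma1FBP}. For the continuous part the same manoeuvre reduces the required bound to $\int_{\mu^2/2}^\infty|h(\lambda)|\,d\lambda<\infty$, which is Lemma \ref{Lemma2FBP}. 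Once absolute convergence/integrability is in place, Fubini legitimizes pulling $\int_0^\infty(\cdot)f_t(s)\,ds$ through, and the computation of the previous paragraph applies verbatim. The case $y=0$ is identical, using instead the $y=0$ branch of \eqref{Bessel_trans_dens} together with the second halves of Lemmas \ref{Lemma1FBP} and \ref{Lemma2FBP}; those hold only for $-1<\nu<1/2$, which is the origin of the restriction $\nu\in\langle-1,1/2\rangle\backslash\{-1/2\}$ stated for $p_{\alpha,d}(\cdot,\cdot;0)$ and $p_{\alpha,c}(\cdot,\cdot;0)$ (cf. the Remark preceding the theorem).

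The main obstacle is bookkeeping rather than analysis: one must treat the four $(\nu,\mu)$-regimes of Linetsky's formula separately and confirm that the dominating estimates supplied by the two lemmas cover each of them, and that the cluster point $\mu^2/2$ of the eigenvalues, where $\mathcal{E}_\alpha(-\lambda_n t^\alpha)$ stays bounded away from $0$, does not spoil summability — which it does not, thanks to the polynomial decay of the coefficients established in Lemma \ref{Lemma1FBP}. Everything else, namely the subordination step and the Mittag-Leffler Laplace identity, is immediate.
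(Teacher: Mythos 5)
Your proposal is correct and follows essentially the same route as the paper's proof: subordination via the density $f_t$ of $E(t)$, Fubini--Tonelli to pass the $\tau$-integral through the spectral sum and the spectral integral (with absolute convergence supplied by Lemmas \ref{Lemma1FBP} and \ref{Lemma2FBP}), the Laplace identity \eqref{Mittag} converting each mode $e^{-\lambda \tau}$ into $\mathcal{E}_{\alpha}(-\lambda t^{\alpha})$, and a case-by-case treatment of the four $(\nu,\mu)$ regimes with the $y=0$ case handled analogously under the restriction $\nu\in\langle -1,1/2\rangle\setminus\{-1/2\}$. No substantive differences from the paper's argument.
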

\begin{proof}
The proof consists primarily of repeated applications of Fubini's theorem (and Fubini-Tonelli) theorem several times as follows. By the definition of the fractional Bessel process \eqref{fracBesselprocess} we have
	\begin{align} \label{FBP_tonelli}
		P(X_{\alpha}(t) \in B | X_{\alpha}(0)=y) &= \int_{0}^{\infty}P(X_{1}(\tau) \in B | X_{1}(0)=y)\,f_{t}(\tau)\,d\tau \nonumber \\
		&=\int_{0}^{\infty}\int_{B}^{}p_1(x,\tau;y)\,f_t(\tau)\,dx \, d\tau  \nonumber \\
		&=\int_{B}^{}\int_{0}^{\infty}(p_d(x,\tau;y)+p_c(x,\tau;y))\, f_t(\tau)\,d\tau \,dx,
	\end{align}
where the second equality follows from the definition of the transition density of the non-fractional Bessel process, and the third equality uses the Fubini–Tonelli theorem to justify the exchange of the order of integration.

Let us first consider the case $y>0$. In the next step, we substitute the spectral representation of the Bessel process from \eqref{Bessel_trans_dens}, yielding:
\begin{align}\label{FBP_proof1}
	\int_{0}^{\infty}p_c(x,\tau;y)\, f_t(\tau)\,d\tau=\int_{0}^{\infty}&\frac{1}{2\pi}\int\limits_{\mu^2/2}^{\infty}\frac{1}{k(\lambda)}\left|\frac{\Gamma(\frac{1}{2}+\nu+i\beta/ k(\lambda))}{\Gamma(1+2\nu)}\right |^2 e^{-\lambda \tau}\left(\frac{x}{y}\right)^{\nu+\frac{1}{2}}e^{\mu(x-y)+\pi \beta /k(\lambda)} \nonumber \\ 
	&\times M_{i\beta/ k(\lambda),\, \nu}(-2ik(\lambda) y) M_{-i\beta/ k(\lambda), \, \nu}(2ik(\lambda) x) f_t(\tau) \, d\lambda d\tau.
\end{align}
In the next step, we change the order of integration using Fubini's theorem (rather than Tonelli’s), since the integrand is not necessarily nonnegative. To justify this step, we verify the absolute integrability of the integrand:
\begin{align*}
	\int_{0}^{\infty}&\frac{1}{2\pi}\int\limits_{\mu^2/2}^{\infty}\frac{1}{k(\lambda)}\left|\frac{\Gamma(\frac{1}{2}+\nu+i\beta/ k(\lambda))}{\Gamma(1+2\nu)}\right |^2 e^{-\lambda \tau}\left(\frac{x}{y}\right)^{\nu+\frac{1}{2}}e^{\mu(x-y)+\pi \beta /k(\lambda)} \\
	&\times \left|M_{i\beta/ k(\lambda),\, \nu}(-2ik(\lambda) y) M_{-i\beta/ k(\lambda), \, \nu}(2ik(\lambda) x)\right| f_t(\tau) \, d\lambda \, d\tau= \\
	=\frac{1}{2\pi}\int\limits_{\mu^2/2}^{\infty}\frac{1}{k(\lambda)}&\left|\frac{\Gamma(\frac{1}{2}+\nu+i\beta/ k(\lambda))}{\Gamma(1+2\nu)}\right |^2 \left(\frac{x}{y}\right)^{\nu+\frac{1}{2}}e^{\mu(x-y)+\pi \beta /k(\lambda)} \\
	&\times \left|M_{i\beta/ k(\lambda),\, \nu}(-2ik(\lambda) y) M_{-i\beta/ k(\lambda), \, \nu}(2ik(\lambda) x)\right| \int_{0}^{\infty} e^{-\lambda \tau} f_t(\tau) \,  d\tau \, d\lambda= \\
	=\frac{1}{2\pi}\int\limits_{\mu^2/2}^{\infty}\frac{1}{k(\lambda)}&\left|\frac{\Gamma(\frac{1}{2}+\nu+i\beta/ k(\lambda))}{\Gamma(1+2\nu)}\right |^2 \left(\frac{x}{y}\right)^{\nu+\frac{1}{2}}e^{\mu(x-y)+\pi \beta /k(\lambda)} \\
	&\times \left|M_{i\beta/ k(\lambda),\, \nu}(-2ik(\lambda) y) M_{-i\beta/ k(\lambda), \, \nu}(2ik(\lambda) x)\right| \mathcal{E}\left(-\lambda t^{\alpha}\right) \,  d\tau \, d\lambda< +\infty,
\end{align*}
where in the first equality we applied the Fubini–Tonelli theorem, in the second we used \eqref{Mittag}, and the finiteness of the integral follows from a direct application of Lemma \ref{Lemma2FBP}.

Now, \eqref{FBP_proof1} simplifies  to
\begin{align*}
	\int_{0}^{\infty}p_c(x,\tau;y)\, f_t(\tau)\,d\tau=\frac{1}{2\pi}\int\limits_{\mu^2/2}^{\infty}&\int_{0}^{\infty}\frac{1}{k(\lambda)}\left|\frac{\Gamma(\frac{1}{2}+\nu+i\beta/ k(\lambda))}{\Gamma(1+2\nu)}\right |^2 e^{-\lambda \tau}\left(\frac{x}{y}\right)^{\nu+\frac{1}{2}}e^{\mu(x-y)+\pi \beta /k(\lambda)} \\ 
	&\times M_{i\beta/ k(\lambda),\, \nu}(-2ik(\lambda) y) M_{-i\beta/ k(\lambda), \, \nu}(2ik(\lambda) x) f_t(\tau) \, d\lambda d\tau \\
	=\frac{1}{2\pi}\int\limits_{\mu^2/2}^{\infty}\frac{1}{k(\lambda)}&\left|\frac{\Gamma(\frac{1}{2}+\nu+i\beta/ k(\lambda))}{\Gamma(1+2\nu)}\right |^2 \left(\frac{x}{y}\right)^{\nu+\frac{1}{2}}e^{\mu(x-y)+\pi \beta /k(\lambda)} \\ 
	&\times M_{i\beta/ k(\lambda),\, \nu}(-2ik(\lambda) y) M_{-i\beta/ k(\lambda), \, \nu}(2ik(\lambda) x) \int_{0}^{\infty}e^{-\lambda \tau} f_t(\tau) \, d\lambda d\tau \\
	=\frac{1}{2\pi}\int\limits_{\mu^2/2}^{\infty}\frac{1}{k(\lambda)}&\left|\frac{\Gamma(\frac{1}{2}+\nu+i\beta/ k(\lambda))}{\Gamma(1+2\nu)}\right |^2 \mathcal{E}_{\alpha}\left(-\lambda t^{\alpha}\right) \left(\frac{x}{y}\right)^{\nu+\frac{1}{2}}e^{\mu(x-y)+\pi \beta /k(\lambda)} \\ 
	&\times M_{i\beta/ k(\lambda),\, \nu}(-2ik(\lambda) y) M_{-i\beta/ k(\lambda), \, \nu}(2ik(\lambda) x) \, d\lambda. 
\end{align*}
This confirms that the continuous part of the spectral representation is given by
\begin{align}\label{FBP_cont_part}
p_{\alpha, c}(x,t; y)=\frac{1}{2\pi}\int\limits_{\mu^2/2}^{\infty}\frac{1}{k(\lambda)}&\left|\frac{\Gamma(\frac{1}{2}+\nu+i\beta/ k(\lambda))}{\Gamma(1+2\nu)}\right |^2 \mathcal{E}_{\alpha}\left(-\lambda t^{\alpha}\right) \left(\frac{x}{y}\right)^{\nu+\frac{1}{2}}e^{\mu(x-y)+\pi \beta /k(\lambda)} \nonumber \\ 
&\times M_{i\beta/ k(\lambda),\, \nu}(-2ik(\lambda) y) M_{-i\beta/ k(\lambda), \, \nu}(2ik(\lambda) x) \, d\lambda. 
\end{align}
The discrete part of the spectral representation is handled based on the values of the parameters $\nu$ and $\mu$:
\begin{itemize}
	\item If $(\nu, \mu) \in \langle -1, -1/2 \rangle \times \langle-\infty, 0 \rangle$, then $p_d(x,t;y)=\pi(x)$, so
	\begin{equation}\label{FBP_disc_part1}
		p_{\alpha, d}(x,t;y)=\int_{0}^{\infty}p_d(x,t;y)f_t(\tau)\,d\tau=\pi(x)\int_{0}^{\infty}f_t(\tau)\,d\tau=\pi(x),
	\end{equation}
where the last equality follows from the fact that $f_t(\cdot)$ is a probability density function.
	\item If $(\nu, \mu) \in \langle -1/2, +\infty \rangle \times \langle 0, +\infty \rangle$, then $p_d(x,t;y)=0$, and thus
	\begin{equation}\label{FBP_disc_part2}
		p_{\alpha, d}(x,t;y)=0.
	\end{equation}	
	\item If $(\nu, \mu) \in \langle -1, -1/2 \rangle \times \langle 0, +\infty \rangle$, we apply the discrete part of the spectral representation of the Bessel process. Then, using Fubini's theorem, we obtain:
	\begin{align}\label{FBP_disc_part3}
		p_{\alpha, d}(x,t;y)=\int_{0}^{\infty}p_d(x,\tau;y)f_t(\tau)\, d\tau=\sum\limits_{n=1}^{\infty}\frac{n!\beta\left(2\beta x\right)^{2\nu+1}}{\left(n+\nu+\frac{1}{2}\right)^{2\nu+3}\Gamma(2\nu+n+1)}e^{2\mu x-\frac{\mu n}{n+\nu+\frac{1}{2}}(x+y)}\nonumber\\ \times L^{2\nu}_{n}\left(\frac{2 \beta x}{n+\nu+\frac{1}{2}}\right)L^{2\nu}_{n}\left(\frac{2 \beta y}{n+\nu+\frac{1}{2}}\right)\int_{0}^{\infty} e^{-\lambda_n t}f_t(\tau)d\tau \nonumber\\
		=\sum\limits_{n=1}^{\infty}\frac{n!\beta\left(2\beta x\right)^{2\nu+1}}{\left(n+\nu+\frac{1}{2}\right)^{2\nu+3}\Gamma(2\nu+n+1)}e^{2\mu x-\frac{\mu n}{n+\nu+\frac{1}{2}}(x+y)}\nonumber\\ \times\mathcal{E}_{\alpha}(-\lambda_n t^{\alpha})L^{2\nu}_{n}\left(\frac{2 \beta x}{n+\nu+\frac{1}{2}}\right)L^{2\nu}_{n}\left(\frac{2 \beta y}{n+\nu+\frac{1}{2}}\right),
	\end{align}
	where the use of Fubini's theorem in the second equality is justified by Lemma \ref{Lemma1FBP}.
		\item If $(\nu, \mu) \in \langle -1/2, +\infty \rangle \times \langle-\infty, 0 \rangle$, then by Lemma \ref{Lemma1FBP}, we similarly obtain:
	\begin{align}\label{FBP_disc_part4}
		p_{\alpha, d}(x,t;y)=\pi(x)+\sum\limits_{n=1}^{\infty}\frac{n!\beta\left(2\beta x\right)^{2\nu+1}}{\left(n+\nu+\frac{1}{2}\right)^{2\nu+3}\Gamma(2\nu+n+1)}e^{2\mu x-\frac{\mu n}{n+\nu+\frac{1}{2}}(x+y)}\nonumber\\ \times\mathcal{E}_{\alpha}(-\lambda_n t^{\alpha})L^{2\nu}_{n}\left(\frac{2 \beta x}{n+\nu+\frac{1}{2}}\right)L^{2\nu}_{n}\left(\frac{2 \beta y}{n+\nu+\frac{1}{2}}\right).
	\end{align}
\end{itemize}
	Finally, combining \eqref{FBP_cont_part}, \eqref{FBP_disc_part1}, \eqref{FBP_disc_part2}, \eqref{FBP_disc_part3} and \eqref{FBP_disc_part4} yields the full spectral representation of the transition density of the fractional Bessel process, as given in \eqref{fracBesselprocess_trans}. Considering  Lemmas \ref{Lemma1FBP} and  \ref{Lemma2FBP} the case $y=0$ is addressed in a similar fashion and therefore we skip this part of the proof.

\end{proof}
\begin{remark}
Notice that for $\alpha=1$, the fractional case reduces to the non-fractional case. In particular, since $E(t)=t \text{ a.s.}$, it follows that
\begin{equation*}
	X_{\alpha}(t)=X_1(t)=X(E(t))=X(t) \text{ a.s.}
\end{equation*} 
Moreover, Mittag-Leffler function reduces to the exponential function: 
\begin{equation*}
	\mathcal{E}_{1}\left(-\lambda t\right)=e^{-\lambda t}
\end{equation*}
and therefore the transition density  of fractional diffusion $p_{\alpha}(x,t;y)$ reduces to the transition density of the non-fractional diffusion $p(x,t;y)$. This observation explains why the defined fractional model can be seen as extension of the non-fractional model.
\end{remark}

\subsection{Fractional Cauchy problem for fractional Bessel process}
The transition density of a diffusion process, which is a solution of SDE
\begin{equation*}
	dX(t)=\mu (X(t))dt + \sigma(X(t))dB(t)
\end{equation*} 
solves Kolmogorov backward partial differential equation involving the corresponding infinitesimal generator:
        \begin{equation}\label{KBPE}
	\frac{\partial p(x,t;y)}{\partial t}= \mu(y)\frac{\partial p(x,t;y)}{\partial y}+\frac{\sigma^2(y)}{2}\frac{\partial^2 p(x,t;y)}{\partial y^2}=\mathcal{G}p
\end{equation}
with the point-source initial condition $p(x,0;y)=\delta(x-y)$, where $\delta$ denotes the Dirac delta function. 

Furthermore, under standard conditions, the family of operators $(T_t, t \geq 0)$, 
\begin{equation*}
	T_t f(y)= \mathbb{E}[f(X(t)) \,|\, X(0)=y]
\end{equation*}
forms a strongly continuous bounded ($C_0$) semigroup on the space of bounded continuous functions $f$ on $\left[0, \infty \right>$ vanishing at infinity (see e.g. \cite{LeonenkoMeerschaertSikorskii_FPD_2013}).
This fact, together with [Theorem 3.1.9, \cite{Arendt}] ensures that the strong (classical) solution of Cauchy problem
\begin{equation}\label{CauchyProblem}
	\frac{\partial u(y,t)}{\partial t}= \mathcal{G} u(y,t), \quad u(y,0)=f(y).
\end{equation}
is given by
\begin{equation*}
u(t;y)=T_t f(y)=\mathbb{E}[f(X(t)) \,|\, X(0)=y]=\int_{0}^{\infty}f(x)p(x,t;y)dx.
\end{equation*}
Moreover, if spectral representation of transition density of a diffusion is available, one can write explicit strong solution in spectral form. 

Fractional diffusions have analogues of equations \eqref{KBPE} and \eqref{CauchyProblem}. Replacing the time derivative on the left-hand side of \eqref{CauchyProblem} with the Caputo fractional derivative \( \frac{\partial^{\alpha}}{\partial t^{\alpha}} \), one obtains the fractional Cauchy problem:
\begin{equation}\label{fracCauchyProblem}
	\frac{\partial^\alpha u(y,t)}{\partial t^\alpha}= \mathcal{G} u(y,t), \quad u(y,0)=f(y).
\end{equation}
The Caputo fractional derivative of order $\alpha \in \langle 0, 1 \rangle$ is defined by
$$\frac{d^{\alpha}f(x)}{dx^{\alpha}} = \frac{1}{\Gamma(1-\alpha)} \int\limits_{0}^{\infty} \frac{d}{dx} f(x-y) y^{-\alpha} \, dy.$$
It turns out that explicit strong solutions to \eqref{CauchyProblem} and \eqref{fracCauchyProblem} are connected via their corresponding stochastic solutions, provided that spectral representations of their transition densities are available.
\begin{remark}
For a general treatment on fractional calculus we refer the reader to \cite{Kilbas2006} and \cite{Podlubny}. We emphasize that there exist other definitions of fractional derivatives such as Riemann-Liouville,  Grunwald-Letnikov, Hadamard, and many others, but Caputo-type derivative is of particular interest in probability theory. The primary reason is that it enables linking time-changed stochastic processes with solutions of certain partial differential equations via fractional calculus. This connection explains why the stochastic process defined in \eqref{fracBesselprocess} is referred to as a fractional diffusion.
\end{remark}
We now return to the Bessel process. As we have already obtained the explicit spectral representation of the transition density for the fractional Bessel process (see Theorem \ref{fracBesselprocess_trans_theorem}), we are equipped with the necessary tools to solve the fractional Cauchy problem \eqref{fracCauchyProblem}. For the Bessel process, this problem reduces to:
\begin{equation}\label{fracBesselCauchyProblem}
		\frac{\partial^\alpha u(y,t)}{\partial t^\alpha}= \left(\frac{\nu+\frac{1}{2}}{y}+\mu\right)\frac{\partial u(y,t)}{\partial y}+\frac{1}{2}\frac{\partial^2 u(y,t)}{\partial y^2} , \quad u(y,0)=f(y).
\end{equation}
\begin{theorem}\label{StrongSolutionFracProbBessel}
If the initial condition function $f$ belongs to the domain of the generator $\mathcal{G}$ as defined in \eqref{BesselDomain}, then a strong solution to the fractional Cauchy problem \eqref{fracBesselCauchyProblem} is given by
	\begin{equation}\label{fbackwardsolution}
		u_{\alpha}(t;y)=\int _0^{\infty} p_\alpha (x,t;y)f(x)dx
	\end{equation}
	where the transition density $p_\alpha $ is given by equation \eqref{fracBesselprocess_trans}.
\end{theorem}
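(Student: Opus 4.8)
\emph{Plan.} The intended solution is the subordination of the classical semigroup solution. First I would recall that, since $(T_t)$ is a $C_0$-semigroup on the relevant space and $f\in D(\mathcal{G})$, the function $u(t;y)=T_tf(y)=\int_0^\infty p(x,t;y)f(x)\,dx$ is the unique strong solution of the classical problem \eqref{CauchyProblem} with $\mathcal{G}$ the Bessel generator, by [Theorem 3.1.9, \cite{Arendt}] as used above; in particular $u(t;\cdot)\in D(\mathcal{G})$ for every $t\ge 0$. Then I would set
\[
u_\alpha(t;y):=\mathbb{E}\big[u(E(t);y)\big]=\int_0^\infty u(\tau;y)\,f_t(\tau)\,d\tau
\]
and show it coincides with $\int_0^\infty p_\alpha(x,t;y)f(x)\,dx$. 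This identity is exactly the construction performed in the proof of Theorem \ref{fracBesselprocess_trans_theorem}: interchanging the $\tau$- and $x$-integrations (legitimate by Tonelli for the discrete part and by the absolute-integrability bounds of Lemmas \ref{Lemma1FBP} and \ref{Lemma2FBP} for the continuous part, with $f$ in the generator domain) turns $\int_0^\infty\big(\int_0^\infty p(x,\tau;y)f(x)\,dx\big)f_t(\tau)\,d\tau$ into $\int_0^\infty\big(\int_0^\infty p(x,\tau;y)f_t(\tau)\,d\tau\big)f(x)\,dx=\int_0^\infty p_\alpha(x,t;y)f(x)\,dx$.

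\emph{Verifying the fractional equation.} The conceptual heart is spectral. Expanding $u(\tau;y)$ in the eigenfunctions of $\mathcal{G}$ from Section \ref{Bessel_process} gives a representation of the form $u(\tau;y)=\sum_n e^{-\lambda_n\tau}c_n\Psi_n(y)+\int_{\mu^2/2}^\infty e^{-\lambda\tau}g(\lambda,y)\,d\lambda$ with $\mathcal{G}\Psi_n=-\lambda_n\Psi_n$ and $\mathcal{G}\Psi(\cdot,\lambda)=-\lambda\Psi(\cdot,\lambda)$. Subordinating term by term, using $\int_0^\infty e^{-\lambda\tau}f_t(\tau)\,d\tau=\mathcal{E}_\alpha(-\lambda t^\alpha)$, reproduces precisely the spectral form \eqref{fracBesselprocess_trans} of $p_\alpha$. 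Now apply $\partial_t^\alpha$ and $\mathcal{G}_y$ term by term: by the eigenrelation $\frac{d^\alpha}{dt^\alpha}\mathcal{E}_\alpha(-\lambda t^\alpha)=-\lambda\,\mathcal{E}_\alpha(-\lambda t^\alpha)$ (the Mittag–Leffler analogue of $\frac{d}{dt}e^{-\lambda t}=-\lambda e^{-\lambda t}$) and the eigenfunction equations, both operations produce the same series/integral, hence $\partial_t^\alpha u_\alpha=\mathcal{G}u_\alpha$. Membership $u_\alpha(t;\cdot)\in D(\mathcal{G})$ follows because each spectral term satisfies the boundary condition $\lim_{x\downarrow 0}f'(x)/\mathfrak{s}(x)=0$ and the expansion converges in the graph norm, and the initial condition $u_\alpha(0;y)=f(y)$ is immediate since $E(0)=0$ a.s. (equivalently $\mathcal{E}_\alpha(0)=1$).

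\emph{Main obstacle and a shortcut for the strong-solution claim.} The delicate point is justifying the term-by-term application of $\partial_t^\alpha$ and $\mathcal{G}_y$, i.e. upgrading the pointwise estimates of Lemmas \ref{Lemma1FBP} and \ref{Lemma2FBP} to locally uniform convergence of the \emph{differentiated} expansions: applying $\mathcal{G}_y$ inserts the extra factor $\lambda_n$ (resp.\ $\lambda$), so the $n^{-3}$ decay from the clustering $\lambda_n\to\mu^2/2$ and the $\lambda^{-3/2}$ integrability of the continuous part must be re-examined after differentiation, with the same care near the endpoints $\lambda=\mu^2/2$, $\lambda=+\infty$ and near $x,y=0$ already needed in Lemma \ref{Lemma2FBP}. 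A cleaner route to the strong-solution statement is via the $t$-Laplace transform: using $\int_0^\infty e^{-st}f_t(\tau)\,dt=s^{\alpha-1}e^{-\tau s^\alpha}$ (the companion of \eqref{Mittag}) and $s\widetilde{u}(s;\cdot)-f=\mathcal{G}\widetilde{u}(s;\cdot)$, one obtains $\widetilde{u_\alpha}(s;\cdot)=s^{\alpha-1}\widetilde{u}(s^\alpha;\cdot)=s^{\alpha-1}(s^\alpha-\mathcal{G})^{-1}f$, which is exactly the transform of \eqref{fracBesselCauchyProblem} since the Caputo derivative of order $\alpha\in\langle 0,1\rangle$ has transform $s^\alpha\widetilde{g}(s)-s^{\alpha-1}g(0)$; by injectivity of the Laplace transform $u_\alpha$ is the mild solution, and its promotion to a strong solution for $f\in D(\mathcal{G})$ follows from the subordination principle for fractional abstract Cauchy problems, the fractional counterpart of the Arendt argument used in the classical case (cf.\ \cite{LeonenkoMeerschaertSikorskii_FPD_2013}, \cite{meerschaert2019stochastic}).
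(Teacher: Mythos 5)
Your ``shortcut'' is, in substance, exactly the paper's proof: the paper establishes that $T_tf(y)=\mathbb{E}[f(X(t))\mid X(0)=y]$ is a strong solution of the classical problem via [Theorem 3.1.9, \cite{Arendt}], then invokes the abstract subordination principle [Theorem 3.1, \cite{BaeumerMeerschaert_2001}] to conclude that $S_tg(y)=\int_0^\infty T_ug(y)\,f_t(u)\,du$ solves the fractional Cauchy problem for $g\in D(\mathcal{G})$, and finally identifies $\int_0^\infty p_\alpha(x,t;y)f(x)\,dx$ with $S_tf(y)$ by conditioning on $E(t)$ (your Fubini interchange is the same identification). So the part of your write-up that actually delivers the strong-solution claim coincides with the paper's argument, merely with the subordination principle reached through a Laplace-transform heuristic rather than cited directly. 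The spectral route you present as the ``conceptual heart'' is not used in the paper, and you are right to flag its weak point: term-by-term application of $\partial_t^\alpha$ and $\mathcal{G}_y$ inserts factors of $\lambda_n$ and $\lambda$ into the expansions, and the estimates of Lemmas \ref{Lemma1FBP} and \ref{Lemma2FBP} would have to be redone for the differentiated series and integral (plus a graph-norm convergence argument for membership in $D(\mathcal{G})$); as written that strand is an incomplete sketch, not a proof. Since you ultimately rest the strong-solution claim on the subordination principle rather than on the spectral computation, your proposal is correct, and the spectral discussion should be regarded as motivation rather than as the proof.
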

\begin{proof}
According to \cite{linetsky_2004}, the family of operators $(T_t, \, t \geq 0)$, where
\begin{equation*}
	T_t f(y)=\mathbb{E}[f(X(t))\,|\,X(0)=y]
\end{equation*}
forms a strongly continuous, bounded semigroup on the space of bounded continuous functions on $\left[0, \infty\right>$, and therefore also on the subspace of bounded continuous functions on $\left[0, \infty \right>$ vanishing at infinity.

Then, [Theorem 3.1.9, \cite{Arendt}] yields that $T_t f(y)=E[f(X(t))\,|\,X(0)=y]$ is a strong solution for the Cauchy problem
\begin{equation*}
	\frac{\partial u(y,t)}{\partial t}= \left(\frac{\nu+\frac{1}{2}}{y}+\mu\right)\frac{\partial u(y,t)}{\partial y}+\frac{1}{2}\frac{\partial^2 u(y,t)}{\partial y^2} , \quad u(y,0)=f(y).	
\end{equation*}
Next, according to [Theorem 3.1, \cite{BaeumerMeerschaert_2001}], for any $g$ in the domain of the generator $\mathcal G$,
\begin{equation}
S_t g (y) = \int\limits_{0}^{\infty}T_u g(y) \,f_t(u)\,du,
\end{equation}
solves the fractional Cauchy problem \eqref{fracBesselCauchyProblem}, where $f_t$ is the density of the inverse stable subordinator $E_t$, given by \eqref{Mittag}.
Finally, since $E(0)=0$ almost surely, and rewriting the expression in \eqref{fbackwardsolution}, we have:
\begin{eqnarray*}
\int_{0}^{\infty} p_{\alpha}(x,t;y)\,f(x)\,dx &=& \mathbb{E}[f(X_{\alpha}(t))\,|\,X_{\alpha}(0)=y]\\
	&=& \int\limits_{0}^{\infty}\mathbb{E}[f(X(u))\,|\,X(0)=y]\, f_t(u)\,du\\
	&=& \int\limits_{0}^{\infty}T_{u}\, f(y)\, f_t(u)\,du=S_t f(y). \\
\end{eqnarray*}
Therefore, for any $f \in D(\mathcal{G})$, \eqref{fbackwardsolution} is a strong solution to \eqref{fracBesselCauchyProblem}.
\end{proof}
As a direct consequence, we can express explicit strong solutions in spectral representation form, depending on the values of the model parameters.
\begin{cor}
For any $f \in D(\mathcal{G})$, a strong solution of fractional Cauchy problem \eqref{fracBesselCauchyProblem} can be written in the form:
%
%
%
\begin{align*}	u_{\alpha}&(y,t)=\frac{1}{2\pi}\int_{0}^{\infty}f(x)\int\limits_{\mu^2/2}^{\infty}\frac{1}{k(\lambda)}\left|\frac{\Gamma(\frac{1}{2}+\nu+i\beta/ k(\lambda))}{\Gamma(1+2\nu)}\right |^2 \mathcal{E}_{\alpha}\left(-\lambda t^{\alpha}\right)\left(\frac{x}{y}\right)^{\nu+\frac{1}{2}}e^{\mu(x-y)+\pi \beta /k(\lambda)} \\
	&\times  M_{i\beta/ k(\lambda),\, \nu}(-2ik(\lambda) y) M_{-i\beta/ k(\lambda), \, \nu}(2ik(\lambda) x) d \lambda\, dx \,+ \\
	&+\begin{cases}
		\int_{0}^{\infty}f(x)\pi(x)dx+\sum\limits_{n=1}^{\infty}\frac{n!\beta}{\left(n+\nu+\frac{1}{2}\right)^{2\nu+3}\Gamma(2\nu+n+1)}e^{-\frac{\mu n}{n+\nu+\frac{1}{2}}y}\mathcal{E}_{\alpha}(-\lambda_n t^{\alpha}) \\ \times L^{2\nu}_{n}\left(\frac{2 \beta y}{n+\nu+\frac{1}{2}}\right)\int_{0}^{\infty}\left(2\beta x\right)^{2\nu+1}L^{2\nu}_{n}\left(\frac{2 \beta x}{n+\nu+\frac{1}{2}}\right) e^{2\mu x-\frac{\mu n x}{n+\nu+\frac{1}{2}}} f(x) dx &,\, (\nu, \mu) \in \langle -1/2, +\infty \rangle \times \langle-\infty, 0 \rangle\\
		\int_{0}^{\infty}f(x)\pi(x)dx &,\, (\nu, \mu) \in \langle -1, -1/2 \rangle \times \langle-\infty, 0 \rangle\\
		0 &,\, (\nu, \mu) \in \langle -1/2, +\infty \rangle \times \langle 0, +\infty \rangle\\
		\sum\limits_{n=1}^{\infty}\frac{n!\beta}{\left(n+\nu+\frac{1}{2}\right)^{2\nu+3}\Gamma(2\nu+n+1)}e^{-\frac{\mu n}{n+\nu+\frac{1}{2}}y}\mathcal{E}_{\alpha}(-\lambda_n t^{\alpha}) \\ \times L^{2\nu}_{n}\left(\frac{2 \beta y}{n+\nu+\frac{1}{2}}\right)\int_{0}^{\infty}\left(2\beta x\right)^{2\nu+1}L^{2\nu}_{n}\left(\frac{2 \beta x}{n+\nu+\frac{1}{2}}\right) e^{2\mu x-\frac{\mu n x}{n+\nu+\frac{1}{2}}} f(x) dx &,\, (\nu, \mu) \in \langle -1, -1/2 \rangle \times \langle 0, +\infty \rangle
	\end{cases}
\end{align*}
for $y>0$, and 
\begin{align*}	u_{\alpha}&(0,t)=\frac{1}{2\pi}\int_{0}^{\infty}f(x)\int\limits_{\mu^2/2}^{\infty}2^{\nu+1/2}k(\lambda)^{\nu - 1/2}e^{i\pi (\nu/2+1/4)}\left|\frac{\Gamma(\frac{1}{2}+\nu+i\beta/ k(\lambda))}{\Gamma(1+2\nu)}\right |^2 \mathcal{E}_{\alpha}\left(-\lambda t^{\alpha}\right) \\
	& \times    x^{\nu+\frac{1}{2}}e^{\mu x+\pi \beta /k(\lambda)}M_{i\beta/ k(\lambda), \, \nu}(-2ik(\lambda) x) d \lambda\ dx + \\
	&+\begin{cases}
		\int_{0}^{\infty}f(x)\pi(x)dx+\sum\limits_{n=1}^{\infty}\frac{\beta}{\left(n+\nu+\frac{1}{2}\right)^{2\nu+3}\Gamma(2\nu+1)}e^{-\frac{\mu n}{n+\nu+\frac{1}{2}}y}\mathcal{E}_{\alpha}(-\lambda_n t^{\alpha}) \\ \times \int_{0}^{\infty}\left(2\beta x\right)^{2\nu+1}L^{2\nu}_{n}\left(\frac{2 \beta x}{n+\nu+\frac{1}{2}}\right) e^{2\mu x-\frac{\mu n x}{n+\nu+\frac{1}{2}}} f(x) dx &,\, (\nu, \mu) \in \langle -1/2, +\infty \rangle \times \langle-\infty, 0 \rangle\\
		\int_{0}^{\infty}f(x)\pi(x)dx &,\, (\nu, \mu) \in \langle -1, -1/2 \rangle \times \langle-\infty, 0 \rangle\\
		0 &,\, (\nu, \mu) \in \langle -1/2, +\infty \rangle \times \langle 0, +\infty \rangle\\
		\sum\limits_{n=1}^{\infty}\frac{\beta}{\left(n+\nu+\frac{1}{2}\right)^{2\nu+3}\Gamma(2\nu+1)}e^{-\frac{\mu n}{n+\nu+\frac{1}{2}}y}\mathcal{E}_{\alpha}(-\lambda_n t^{\alpha}) \\ \times \int_{0}^{\infty}\left(2\beta x\right)^{2\nu+1}L^{2\nu}_{n}\left(\frac{2 \beta x}{n+\nu+\frac{1}{2}}\right) e^{2\mu x-\frac{\mu n x}{n+\nu+\frac{1}{2}}} f(x) dx &,\, (\nu, \mu) \in \langle -1, -1/2 \rangle \times \langle 0, +\infty \rangle
	\end{cases}
\end{align*}
for $y=0$.
\end{cor}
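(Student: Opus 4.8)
The plan is to combine the two results already in hand. By Theorem \ref{StrongSolutionFracProbBessel}, for every $f\in D(\mathcal{G})$ the function $u_\alpha(y,t)=\int_0^\infty p_\alpha(x,t;y)f(x)\,dx$ from \eqref{fbackwardsolution} is a strong solution of the fractional Cauchy problem \eqref{fracBesselCauchyProblem}. It therefore only remains to substitute the explicit spectral representation \eqref{fracBesselprocess_trans} of $p_\alpha$ obtained in Theorem \ref{fracBesselprocess_trans_theorem} and rearrange. Writing $p_\alpha=p_{\alpha,d}+p_{\alpha,c}$ and separating the analysis into the four parameter regimes for $(\nu,\mu)$ (and the separate initial point $y=0$), I would split $u_\alpha$ into $\int_0^\infty p_{\alpha,d}(x,t;y)f(x)\,dx$ and $\int_0^\infty p_{\alpha,c}(x,t;y)f(x)\,dx$ and simplify each piece.

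For the discrete contribution, in the regimes where $p_{\alpha,d}$ is an infinite series I would interchange $\sum_n$ with the $x$-integration. Each summand then factors as the $n$-dependent constant $\frac{n!\beta}{(n+\nu+\frac12)^{2\nu+3}\Gamma(2\nu+n+1)}$, the Mittag--Leffler factor $\mathcal{E}_\alpha(-\lambda_n t^\alpha)$, the $y$-factor $e^{-\mu n y/(n+\nu+\frac12)}L^{2\nu}_n\!\big(\tfrac{2\beta y}{n+\nu+\frac12}\big)$, and the inner integral $\int_0^\infty(2\beta x)^{2\nu+1}L^{2\nu}_n\!\big(\tfrac{2\beta x}{n+\nu+\frac12}\big)e^{2\mu x-\mu n x/(n+\nu+\frac12)}f(x)\,dx$, while the stationary term $\pi(x)$ contributes $\int_0^\infty f(x)\pi(x)\,dx$; this reproduces the bracketed expressions in the statement. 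The case $y=0$ is handled identically, the only changes being the disappearance of the factor $L^{2\nu}_n\!\big(\tfrac{2\beta y}{n+\nu+\frac12}\big)$ and of one factor $n!$, in agreement with the form of $p_{\alpha,d}(x,t;0)$ in Theorem \ref{fracBesselprocess_trans_theorem}. For the continuous contribution I would interchange the $dx$- and $d\lambda$-integrations to reach the stated double integral, both for $y>0$ and, using the restriction $\nu\in\langle-1,1/2\rangle\setminus\{-1/2\}$, for $y=0$.

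The only substantive point is the legitimacy of these interchanges; everything else is collection of terms. For this I would invoke the Fubini--Tonelli theorem together with the bounds produced in the proofs of Lemma \ref{Lemma1FBP} and Lemma \ref{Lemma2FBP}: those lemmas already show, for fixed $x,y$, that the series of absolute values of the discrete summands and the integral of the absolute value of the continuous integrand are finite. Integrating in addition against $f$ preserves finiteness because $f\in D(\mathcal{G})$ is a bounded continuous function vanishing at infinity (so $|f|\le\|f\|_\infty$) and $f\in L^2\big([0,\infty),\mathfrak{m}\big)$; indeed, since $(2\beta x)^{2\nu+1}e^{2\mu x}$ is, up to a constant, the speed density $\mathfrak{m}(x)$, the inner $x$-integrals above are constant multiples of inner products $\langle f,\Psi_n\rangle_{L^2(\mathfrak{m})}$ (and, in the continuous part, of $\langle f,\Psi(\cdot,\lambda)\rangle$), which are controlled by Cauchy--Schwarz in $L^2\big([0,\infty),\mathfrak{m}\big)$ and then summed/integrated using the very decay estimates of Lemmas \ref{Lemma1FBP}--\ref{Lemma2FBP}. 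Once absolute integrability is established, Fubini's theorem justifies all the rearrangements and the corollary follows. I expect the main nuisance to be the bookkeeping across the four $(\nu,\mu)$ regimes and the $y=0$ case, which is tedious but entirely mechanical given Theorems \ref{fracBesselprocess_trans_theorem} and \ref{StrongSolutionFracProbBessel}.
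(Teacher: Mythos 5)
Your proposal is correct and follows essentially the same route as the paper, whose own proof is simply the one-line observation that the corollary is a direct application of Theorem \ref{fracBesselprocess_trans_theorem} and Theorem \ref{StrongSolutionFracProbBessel}. The additional care you take in justifying the interchange of the $x$-integration with the spectral sum and $\lambda$-integral (via Lemmas \ref{Lemma1FBP}--\ref{Lemma2FBP} and the $L^2(\mathfrak{m})$ structure) goes beyond what the paper writes down, but is fully consistent with its argument.
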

\begin{proof}
This result follows by a direct application of Theorem \ref{fracBesselprocess_trans_theorem}  and Theorem \ref{StrongSolutionFracProbBessel}.
\end{proof}
\subsection{Stationary distribution of fractional Bessel process}
Here, we prove that the stationary distribution of the fractional Bessel process coincides with the stationary distribution of the non-fractional Bessel process (provided it exists). In other words, both the fractional and non-fractional Bessel processes share the same stationary distribution.

\begin{theorem}
Let $\left(X_{\alpha}(t), \, t \geq 0\right)$ be the fractional Bessel process defined via \eqref{fracBesselprocess} and let $p_{\alpha}(x,t)$ denote the probability density function of $X_{\alpha}(t)$. If $(\nu, \mu ) \in \langle -1, \infty \rangle \setminus \left\{ -1/2 \right\}  \times \langle -\infty, 0\rangle$, then for any initial probability density function $f$ of $X_{\alpha}(0)$ vanishing at infinity, we have
\begin{equation*}
p_{\alpha}(x,t) \to \pi(x), \,\text{ as }\,  t \to \infty,
\end{equation*}
where $\pi(\cdot)$ is a gamma stationary distribution \eqref{stat_dist_Bessel}.
\end{theorem}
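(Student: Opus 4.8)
The plan is to start from the identity $p_{\alpha}(x,t)=\int_0^\infty p_{\alpha}(x,t;y)\,f(y)\,dy$, which expresses the one–dimensional density of $X_{\alpha}(t)$ through the transition density of Theorem~\ref{fracBesselprocess_trans_theorem} and the initial density $f$, and then to let $t\to\infty$ under the integral sign. Since $f$ is a probability density on $\left[0,+\infty\right>$, the single point $y=0$ is $f$-negligible, so only the $y>0$ formulas of Theorem~\ref{fracBesselprocess_trans_theorem} are needed and the delicate $y=0$ case (with its extra restriction on $\nu$) may be ignored. Under the standing hypothesis $\mu<0$, $\nu\in\langle -1,+\infty\rangle\backslash\{-1/2\}$, the gamma density $\pi$ of \eqref{stat_dist_Bessel} is well defined, and one writes $p_{\alpha}(x,t;y)=\pi(x)+R_{\alpha}(x,t;y)$, where $R_{\alpha}$ collects the Mittag–Leffler–weighted Laguerre series (present only when $\nu>-1/2$) together with the Mittag–Leffler–weighted spectral integral over $\langle \mu^2/2,+\infty\rangle$. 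The crucial structural observation is that the entire $t$-dependence of $R_{\alpha}$ sits inside the factors $\mathcal E_{\alpha}(-\lambda_n t^{\alpha})$ and $\mathcal E_{\alpha}(-\lambda t^{\alpha})$.

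Next I would use two elementary facts about the Mittag–Leffler function: $0\le\mathcal E_{\alpha}(-u)\le\mathcal E_{\alpha}(0)=1$ for $u\ge0$, and $\mathcal E_{\alpha}(-\lambda t^{\alpha})\to0$ as $t\to\infty$ for every $\lambda>0$, the latter from the asymptotics $\mathcal E_{\alpha}(-u)\sim 1/(\Gamma(1-\alpha)u)$ already invoked in the proof of Lemma~\ref{Lemma2FBP}. Because $\mu\neq0$ we have $\mu^2/2>0$, so every point of the continuous spectrum is strictly positive, and $\lambda_n>0$ for all $n\ge1$; hence each Mittag–Leffler factor appearing in $R_{\alpha}$ tends to $0$ pointwise in $n$ and in $\lambda$, while the only $t$-independent term, $\pi(x)$, survives. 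Since $\int_0^\infty\pi(x)f(y)\,dy=\pi(x)$, the whole statement reduces to showing $\int_0^\infty R_{\alpha}(x,t;y)\,f(y)\,dy\to0$ as $t\to\infty$.

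The remaining work is a chain of dominated–convergence and Fubini interchanges: to swap $\lim_{t\to\infty}$ with the Laguerre series, with the spectral integral $\int_{\mu^2/2}^{\infty}d\lambda$, and with $\int_0^\infty f(y)\,dy$. The point is that the majorants produced in Lemma~\ref{Lemma1FBP} (the bounds $c_{x,y,\mu,\nu}/n^{3}$, resp.\ $c_{x,\mu,\nu}/n^{2\nu+3}$) and in Lemma~\ref{Lemma2FBP} (the integrable function $h(\lambda)$) are already uniform in $t$, since those estimates used only $\mathcal E_{\alpha}\le1$; consequently the series and the $\lambda$-integral pass to the limit term by term and vanish. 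What is genuinely technical — and what I expect to be the main obstacle — is verifying that these $t$-uniform majorants, viewed as functions of $y$, are integrable against $f(y)\,dy$, so that the outer $y$-integration also passes to the limit. The factor $e^{\mu(x-y)}=e^{\mu x}e^{|\mu|y}$ (with $\mu<0$) is the term to watch; one controls it either by the assumed decay of $f$ at infinity, or by sharpening the Whittaker/Laguerre asymptotics in the $y$-variable (a genuine transition kernel of a negatively drifting diffusion decays rapidly in its starting point). Once this integrability is secured, every non-$\pi$ contribution tends to $0$ and $p_{\alpha}(x,t)\to\pi(x)$. A robust alternative that sidesteps the $y$-integral is to use the subordination identity $p_{\alpha}(x,t)=\int_0^\infty p_1(x,s)\,f_t(s)\,ds$ together with the ergodicity of the non-fractional Bessel process ($p_1(x,s)\to\pi(x)$ as $s\to\infty$, itself a byproduct of the $\alpha=1$ spectral representation) and the a.s.\ divergence $E(t)\to\infty$: splitting the integral at a large $S$ and using $P(E(t)\le S)=P(D(S)\ge t)\to0$ yields the same conclusion.
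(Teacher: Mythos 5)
Your main argument is essentially the paper's proof: both reduce the claim to the pointwise limit $p_{\alpha}(x,t;y)\to\pi(x)$ for fixed $x,y$, using the spectral representation of Theorem \ref{fracBesselprocess_trans_theorem}, the fact that every relevant spectral point is strictly positive ($\lambda_n>0$ for $n\geq 1$ and $\lambda\geq\mu^2/2>0$) so that each Mittag-Leffler factor decays, and the $t$-uniform majorants of Lemmas \ref{Lemma1FBP} and \ref{Lemma2FBP} to interchange the limit with the series, the spectral integral, and the outer $y$-integral. The one technical difference in this route is that the paper treats the continuous part quantitatively, sandwiching $\mathcal{E}_{\alpha}(-\lambda t^{\alpha})$ between $(c\lambda t^{\alpha})^{-1}$ and $\Gamma(1+\alpha)(\lambda t^{\alpha})^{-1}$ via the bounds \eqref{Mittag_Leffler_unif_bounds}, which gives an explicit $O(t^{-\alpha})$ decay rather than bare pointwise vanishing; your dominated-convergence version reaches the same conclusion. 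The obstacle you single out --- producing a $t$-uniform, $f\,dy$-integrable dominant in $y$ despite the $e^{-\mu y}$ growth in the majorants --- is genuine, but the paper's own justification at that step (``because $f$ and $p_{\alpha}$ are density functions and $f$ vanishes at infinity'') is no more detailed than your sketch, so you are not missing anything the paper supplies. Finally, your alternative via the subordination identity $p_{\alpha}(x,t)=\int_0^{\infty}p_1(x,s)f_t(s)\,ds$, the ergodicity of the non-fractional process, and $P(E(t)\leq S)=P(D(S)\geq t)\to 0$ is a genuinely different and arguably cleaner route: it transfers the entire problem to the $\alpha=1$ case and avoids re-examining the fractional spectral series, at the cost of invoking (or separately proving) the non-fractional ergodic limit. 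Either route is acceptable.
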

\begin{proof}
It is sufficient to prove the convergence of the transition density, i.e., to show that
\begin{equation}\label{trans_limit}
	p_{\alpha}(x,t;y) \to \pi(x), \,\text{ as }\, t \to \infty
\end{equation}
for fixed $x$ and $y$.
To see this, recall that
\begin{equation*}
	p_{\alpha}(x,t)=\int_{0}^{\infty} p_{\alpha}(x,t;y) f(y)dy. 
\end{equation*}
Assuming \eqref{trans_limit}, we obtain
\begin{align*}
	\lim_{t \to \infty} p_{\alpha}(x,t)&=\lim_{t \to \infty}\int_{0}^{\infty} p_{\alpha}(x,t;y) f(y)dy  \\
	&=\int_{0}^{\infty} \lim_{t \to \infty} p_{\alpha}(x,t;y) f(y)dy \\
	&=\pi(x)\int_{0}^{\infty}f(y)dy \\
	&= \pi(x).
\end{align*}
The second equality follows from the dominated convergence theorem, which applies because $f(y)$ and $p_{\alpha}(x, t; y)$ are density functions and $f$ vanishes at infinity.
Thus, it remains to prove \eqref{trans_limit}. Without loss of generality, we consider the case $y>0$. Recalling the spectral representation of transition density of the fractional Bessel process (see Theorem \ref{fracBesselprocess_trans_theorem}), we need to show that the continuous part vanishes, while discrete part converges to $\pi(x)$ as $t$ approaches infinity.

We begin by proving that the continuous part of the spectral representation, \( p_{\alpha, c}(x, t; y) \), vanishes as \( t \to \infty \). According to [Theorem 4, \cite{Simon_Mittag}], the Mittag-Leffler function satisfies the following bounds:
\begin{equation}\label{Mittag_Leffler_unif_bounds}
0 <\frac{1}{1+\Gamma(1-\alpha)x}\leq \mathcal{E}_{\alpha}(-x) \leq \frac{1}{1+\Gamma(1+\alpha)^{-1}x}
\end{equation}
for $x  \geq 0$.
In particular, for any constant $c \geq \left(\mu^2/2 \cdot t^{\alpha}\right)^{-1}+\Gamma(1-\alpha)$ we have:
\begin{equation*}
	0 <\frac{1}{c \lambda t^\alpha}\leq \frac{1}{1+\Gamma(1-\alpha)\lambda t^{\alpha}} \leq \mathcal{E}_{\alpha}(-\lambda t^{\alpha}) \leq \frac{1}{1+\Gamma(1+\alpha)^{-1}\lambda t^{\alpha}} \leq  \frac{1}{\Gamma(1+\alpha)^{-1}\lambda t^{\alpha}}.
\end{equation*} 
Now
\begin{align*}
	 	 &\frac{c}{2\pi}\frac{1}{t^{\alpha}}\int\limits_{\mu^2/2}^{\infty}\frac{1}{k(\lambda)}\left|\frac{\Gamma(\frac{1}{2}+\nu+i\beta/ k(\lambda))}{\Gamma(1+2\nu)}\right |^2 \frac{1}{\lambda}\left(\frac{x}{y}\right)^{\nu+\frac{1}{2}}e^{\mu(x-y)+\pi \beta /k(\lambda)} \\
	 &\times M_{i\beta/ k(\lambda),\, \nu}(-2ik(\lambda) y) M_{-i\beta/ k(\lambda), \, \nu}(2ik(\lambda) x)\, d\lambda \\
	&\leq p_{\alpha, c}(x,t;y) \\ 
	 &\leq \frac{1}{2\pi}\frac{1}{t^{\alpha}}\int\limits_{\mu^2/2}^{\infty}\frac{1}{k(\lambda)}\left|\frac{\Gamma(\frac{1}{2}+\nu+i\beta/ k(\lambda))}{\Gamma(1+2\nu)}\right |^2 \frac{\Gamma(1+\alpha)}{\lambda}\left(\frac{x}{y}\right)^{\nu+\frac{1}{2}}e^{\mu(x-y)+\pi \beta /k(\lambda)} \\
	&\times M_{i\beta/ k(\lambda),\, \nu}(-2ik(\lambda) y) M_{-i\beta/ k(\lambda), \, \nu}(2ik(\lambda) x)\, d\lambda
\end{align*} 
for fixed $x$ and $y$. Since the integrals involved are finite (by Lemma \ref{Lemma2FBP}), we conclude 
\begin{equation*}
p_{\alpha, c}(x,t;y) \to 0, \text{ as } t \to \infty.
\end{equation*}
As for the discrete part:
\begin{itemize}
	\item  If $(\nu, \mu ) \in \langle-1, -1/2 \rangle \times \langle -\infty, 0\rangle$, the result is immediate, since $p_{\alpha, d}(x,t;y)=\pi(x)$. 
	\item  If $(\nu, \mu ) \in \langle-1/2, +\infty \rangle \times \langle -\infty, 0\rangle$, then
\begin{align*}
\lim_{t \to \infty}p_{\alpha, d}(x,t;y)&= \pi(x)+\lim_{t \to \infty}\sum\limits_{n=1}^{\infty}\frac{n!\beta\left(2\beta x\right)^{2\nu+1}}{\left(n+\nu+\frac{1}{2}\right)^{2\nu+3}\Gamma(2\nu+n+1)}e^{2\mu x-\frac{\mu n}{n+\nu+\frac{1}{2}}(x+y)} \\ 
&\times \mathcal{E}_{\alpha}(-\lambda_n t^{\alpha})L^{2\nu}_{n}\left(\frac{2 \beta x}{n+\nu+\frac{1}{2}}\right)L^{2\nu}_{n}\left(\frac{2 \beta y}{n+\nu+\frac{1}{2}}\right) \\
&= \pi(x)+\sum\limits_{n=1}^{\infty}\frac{n!\beta\left(2\beta x\right)^{2\nu+1}}{\left(n+\nu+\frac{1}{2}\right)^{2\nu+3}\Gamma(2\nu+n+1)}e^{2\mu x-\frac{\mu n}{n+\nu+\frac{1}{2}}(x+y)} \\ 
&\times \left(\lim_{t \to \infty}\mathcal{E}_{\alpha}(-\lambda_n t^{\alpha})\right)L^{2\nu}_{n}\left(\frac{2 \beta x}{n+\nu+\frac{1}{2}}\right)L^{2\nu}_{n}\left(\frac{2 \beta y}{n+\nu+\frac{1}{2}}\right) \\
&=\pi(x).
\end{align*}
The use of dominated convergence theorem in the second equality is justified by \eqref{Mittag_Leffler_unif_bounds} and arguments similar to those in the proof of Lemma \ref{Lemma1FBP}.
Finally, for fixed $x$ and $y$, we conclude:
\begin{equation*}
\lim_{t \to \infty}p_{\alpha}(x,t;y)=\lim_{t \to \infty}p_{\alpha,d}(x,t;y)+\lim_{t \to \infty}p_{\alpha,c}(x,t;y)=\pi(x),
\end{equation*}
which completes the proof.
\end{itemize}
\end{proof}
\subsection{Correlation Structure of the fractional Bessel process}

In this subsection, we provide an explicit formula for the correlation structure of both the fractional Bessel process and its eigenfunction-based transformation. Moreover, we show that both the fractional Bessel process and the transformed process exhibit the long-range dependence property, i.e., their correlation function falls of like a power law with exponent $0 < \alpha <1$.

We assume that the (non-fractional) Bessel process is stationary, i.e., that a stationary gamma distribution \( \pi \) exists (which requires \( (\nu, \mu) \in \langle -1, \infty \rangle \setminus \left\{ -1/2 \right\} \times \langle -\infty, 0 \rangle \)) and that \( X(0) \sim \pi \). Although the fractional Bessel process is non-stationary, we say it is in the steady state if it starts from the stationary distribution with density \( \pi \).

We begin by showing that a space transformation of the fractional Bessel process via eigenfunctions yields a long-range dependent stochastic process.

\begin{theorem}\label{Corr_structure_frac_transfBessel}
	Let $m, n \in \N$ and $t\geq s >0$. Then
\begin{align*}
	\mathrm{Corr}[\Psi_m(X_{\alpha}(t)), \Psi_n(X_{\alpha}(s))] 
	&= \mathrm{Corr}[\Psi_m(X_{\alpha}(0)), \Psi_n(X_{\alpha}(0))] \\
	&\quad \times \left( \mathcal{E}_{\alpha}(-\lambda_m t^{\alpha}) 
	+ \frac{\lambda_m t^{\alpha}}{\Gamma(\alpha)} 
	\int_{0}^{s/t} \frac{\mathcal{E}_{\alpha}(-\lambda_m t^{\alpha}(1 - z)^{\alpha})}{z^{1 - \alpha}}\, dz \right).
\end{align*}
\end{theorem}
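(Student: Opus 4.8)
The plan is to exploit the eigenfunction property of $\Psi_n$ for the non‑fractional Bessel semigroup together with the independence of the outer process and the inverse subordinator. First I would recall from Linetsky's spectral theory that $\Psi_n$ is an eigenfunction of the generator $\mathcal{G}$ with eigenvalue $-\lambda_n$, so that $T_t \Psi_n = e^{-\lambda_n t}\Psi_n$; hence, in the stationary (non‑fractional) regime, $\Exp[\Psi_m(X(t))\Psi_n(X(s))] = e^{-\lambda_m(t-s)}\Exp[\Psi_m(X(s))\Psi_n(X(s))]$ for $t \geq s$, and since distinct eigenfunctions are orthogonal in $L^2(\m)$ while $X(s)\sim\pi$, this reduces (the $m=n$ case being the substantive one) to $e^{-\lambda_m(t-s)}$ times a constant depending only on $m,n$. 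This gives the exponential correlation structure of the base process, from which the variances $\Var[\Psi_m(X(s))]$ are time‑independent.

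Next I would condition on the pair $(E(s), E(t))$. Using $X_\alpha(t)=X(E(t))$, independence, and the Markov property of $X$, the covariance of $\Psi_m(X_\alpha(t))$ and $\Psi_n(X_\alpha(s))$ equals $\Exp\big[ e^{-\lambda_m (E(t)-E(s))}\big]\cdot \mathrm{const}$ when $m=n$ (and $0$ otherwise after accounting for the stationary cross terms), because $E$ is non‑decreasing so $E(t)\geq E(s)$ a.s. Thus the whole problem collapses to evaluating $\Exp[e^{-\lambda_m(E(t)-E(s))}]$. The correlation is then this Laplace‑type functional of the increment of the inverse subordinator, divided by the (constant) standard deviations, which already exhibits the claimed factorization into $\Corr[\Psi_m(X_\alpha(0)),\Psi_n(X_\alpha(0))]$ times a purely $(\alpha,\lambda_m,s,t)$‑dependent factor.

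The remaining, and main, computational step is to show that
\[
\Exp\!\left[e^{-\lambda_m(E(t)-E(s))}\right]
= \mathcal{E}_{\alpha}(-\lambda_m t^{\alpha})
+ \frac{\lambda_m t^{\alpha}}{\Gamma(\alpha)}\int_{0}^{s/t}\frac{\mathcal{E}_{\alpha}(-\lambda_m t^{\alpha}(1-z)^{\alpha})}{z^{1-\alpha}}\,dz .
\]
I would obtain this from the known joint law of $(E(s),E(t))$ for the inverse $\alpha$‑stable subordinator — equivalently, from the renewal‑type decomposition $E(t)-E(s)\overset{d}{=}$ (given $E(s)=u$) the inverse subordinator increment, whose conditional Laplace transform is governed by a Mittag‑Leffler kernel. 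Concretely, split according to whether $D(E(s)) = s$ exactly (the probability‑$\mathcal{E}_\alpha(-\lambda_m t^\alpha)$‑type "no renewal before $s$" contribution, using that the first passage level has a Mittag‑Leffler Laplace transform) or a jump straddles $s$, in which case the overshoot density contributes the convolution integral with the $z^{\alpha-1}/\Gamma(\alpha)$ weight after the change of variables $u = tz$ and the self‑similarity $E(t)\overset{d}{=}t^\alpha E(1)$. A cleaner route is to use the formula (e.g. from Leonenko–Meerschaert–Sikorskii or Baeumer–Meerschaert) for $\Exp[e^{-\lambda(E(t)-E(s))}]$ already expressed via $\int_0^s \mathcal{E}_\alpha(-\lambda(t-\tau)^\alpha)$ against the density of $E(s)$, then integrate by parts and substitute $\tau = tz$; the subordinator/renewal identities and the scaling property of the Mittag‑Leffler function then deliver the stated closed form.

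The main obstacle is precisely this last identity: one must correctly handle the two‑time dependence of the inverse stable subordinator, since $E(s)$ and $E(t)-E(s)$ are \emph{not} independent (unlike increments of the subordinator $D$ itself). The key is to condition on $E(s)$ and use that, given $E(s)=u$, the future $\{E(s+r)-u : r\geq 0\}$ behaves like an inverse stable subordinator started from an overshoot, whose law is explicit; keeping careful track of the atom‑like "no‑jump" term versus the overshoot integral, and matching the normalizations $\Gamma(\alpha)$ versus $\Gamma(1+\alpha)$, is where the care is needed. Everything else — the eigenfunction relation, the orthogonality, and the reduction of the correlation to a Laplace functional of $E(t)-E(s)$ — is routine given the results already established in the paper.
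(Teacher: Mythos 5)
Your proposal is correct and follows essentially the same route as the paper: the paper likewise uses the eigenfunction relation $T_t\Psi_n=e^{-\lambda_n t}\Psi_n$ (via Kessler's observation) and then invokes Theorem 2.2 of Patie--Srapionyan, whose content is precisely your reduction of the correlation to the Laplace functional $\Exp\bigl[e^{-\lambda_m(E(t)-E(s))}\bigr]$ followed by the Leonenko--Meerschaert--Sikorskii bivariate Mittag-Leffler identity (which the paper itself records later as its equation for $\int_0^\infty\int_0^\infty e^{-\theta|u-v|}H(du,dv)$, and which matches your normalization since $\Gamma(1+\alpha)=\alpha\Gamma(\alpha)$). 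The only difference is that you unpack the cited theorem by conditioning on $(E(s),E(t))$ explicitly, which is a valid and self-contained substitute for the citation.
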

\begin{proof}
According to \cite{kessler1999estimating}, for any eigenfunction–eigenvalue pair $(f, \lambda)$ of the diffusion generator $\mathcal{G}$, i.e., for $f \in D(\mathcal{G})$ such that
\begin{equation*}
	\mathcal{G}f=-\lambda f
\end{equation*}
it holds that
\begin{equation*}
	T_t f(x)=\mathbb{E}[f(X(t)) | X(0)=x]=e^{-\lambda t}f(x).
\end{equation*}
Therefore, 
\begin{equation*}
	\mathbb{E}[\Psi_n(X(t)) | X(0)=x]=e^{-\lambda_n t}\Psi_n(x)
\end{equation*}
since \( \Psi_n(x) \) is an eigenfunction of \( \mathcal{G} \) with corresponding eigenvalue \( \lambda_n \), as specified in \eqref{Bessel_eigenvalue}.
Now, applying Theorem 2.2 (and also Example 3.1) from \cite{patie_srapionyan_2021}, the result follows.
\end{proof}

\begin{remark}
Based on the preceding theorem, if a first-order polynomial is an eigenfunction of a diffusion process, then it is straightforward to show that the process exhibits long-range dependence. However, since this condition does not hold for the Bessel process, we proceed differently.
\end{remark}
To determine the correlation structure of the (fractional) Bessel process, we utilize the spectral representation of its transition density (see  \eqref{Bessel_trans_dens} and Theorem \ref{fracBesselprocess_trans}). To simplify the formulas and reduce their complexity, we introduce the following notation:
\begin{align}
\Psi^{*}_n(x) &= \frac{1}{n+\nu+1/2}\left(\frac{1}{2\beta}\right)^{\nu+1/2}\left(\nu+\frac{3}{2}\right)^{\nu+3/2} \sqrt{\frac{\Gamma(2\nu+n+1)}{n! \Gamma(2\nu+1)}}\Psi_n(x), \\
\Psi^*(x,\lambda)&=\sqrt{\frac{2\nu+1}{2\pi k(\lambda)\Gamma(2\nu+1)}}\frac{|\Gamma\left(\frac{1}{2}+\nu+i\beta/k(\lambda)\right)|}{\left(-2\mu\right)^{\nu+1}}x^{-\nu-1/2}e^{-\mu x+ \pi \beta/(2k(\lambda))}\Psi(x,\lambda),
\end{align}
where $\Psi_n(x)$ and $\Psi(x,\lambda)$ are given by \eqref{eigenfunction_discrete} and \eqref{eigenfunction_essential}.
Since the eigenfunctions $\left(\Psi_n, \n \in \mathbb{N}_0\right)$ form an orthogonal system in $L^{2}\left(\left[ 0, +\infty\right>, \mathfrak{m}\right)$, we have
\begin{equation*}
\int_{\mathbb{R}}\Psi^*_n(y)\Psi^*_m(y)\pi(y)dy	=0, \quad n \neq m.
\end{equation*}
Moreover, $\lambda_0=1$ and $\Psi^*_0(x)=1$. Finally, the transition density takes on a more conventional form (see e.g. \cite{linetsky2004spectral}):
\begin{equation}\label{spec_rep_conv_form}
p(x,t;y)=\pi(x)\left(\sum_{n=0}^{\infty}  \Psi^*_n(x)\Psi^*_n(y)e^{-\lambda_n t} + \int_{\mu^2/2}^{\infty}\Psi^*(x, \lambda)\Psi^*(y, \lambda)e^{-\lambda t} d\lambda\right)
\end{equation}
which clearly demonstrates the exponential rate at which the transition density approaches the stationary density.
\begin{theorem}
Let $\mu<0$, $\nu \in \langle -1, +\infty \rangle \backslash \{-1/2\}$, and let $(X(t), \, t \geq 0)$ be a stationary solution of the SDE \eqref{Bessel_SDE}. Then, for any $t\geq s$ and $y>0$:
\begin{itemize}
\item[i)]\begin{equation}
 \mathbb{E}[X(t) | X(s)= y] =\sum_{n=0}^{\infty} c_n \Psi^*_n(y)e^{-\lambda_n(t-s)} + \int_{\mu^2/2}^{\infty}c(\lambda)\Psi^*(y, \lambda)e^{-\lambda (t-s)} d\lambda,  
\end{equation}
\item[ii)]\begin{equation}\label{corr_Bess_cons_drift}  
 \mathrm{Corr}(X(t),X(s))=\mu^{-2} \left(\frac{\nu}{2}+\frac{1}{2}\right) \biggl[\sum_{n=1}^{\infty} c^2_ne^{-\lambda_n(t-s)} + \int_{\mu^2/2}^{\infty}c(\lambda)^2e^{-\lambda (t-s)} d\lambda\biggr],
\end{equation}
\end{itemize}
where
\begin{equation*}
c_n=\int_{\mathbb{R}} x \Psi^*_n(x)\pi(x)dx, \quad c(\lambda)=\int_{\mathbb{R}}x\Psi^*(x,\lambda)\pi(x) dx.
\end{equation*}
\end{theorem}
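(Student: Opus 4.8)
\emph{Overall approach.} I would push everything through the symmetric spectral representation \eqref{spec_rep_conv_form}, together with time-homogeneity of the Bessel process and the standing stationarity assumption $X(0)\sim\pi$ (so $X(u)\sim\pi$ for every $u\ge0$). Write $\iota(x)=x$. Since the stationary law \eqref{stat_dist_Bessel} is a gamma distribution, all of its moments are finite, hence $\iota\in L^2(\pi)$ (the $L^2$ space of $\pi$), and $\iota$ has an $L^2(\pi)$ spectral expansion $\iota=\sum_{n\ge0}c_n\Psi^*_n+\int_{\mu^2/2}^{\infty}c(\lambda)\Psi^*(\cdot,\lambda)\,d\lambda$ whose coefficients are exactly the $c_n$ and $c(\lambda)$ named in the statement, as the orthonormal system $\{\Psi^*_n\}\cup\{\Psi^*(\cdot,\lambda)\}$ is the one underlying \eqref{spec_rep_conv_form}. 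For part i), the Markov property and time-homogeneity give $\mathbb{E}[X(t)\mid X(s)=y]=\int_0^\infty x\,p(x,t-s;y)\,dx$; substituting \eqref{spec_rep_conv_form} with $t$ replaced by $t-s$ and interchanging the $x$-integral with the discrete sum and with the $\lambda$-integral produces the coefficients $\int_0^\infty x\Psi^*_n(x)\pi(x)\,dx=c_n$ and $\int_0^\infty x\Psi^*(x,\lambda)\pi(x)\,dx=c(\lambda)$, which is precisely i). Equivalently, one applies the semigroup $T_{t-s}$ term by term to the expansion of $\iota$, using $T_{t-s}\Psi^*_n=e^{-\lambda_n(t-s)}\Psi^*_n$ and $T_{t-s}\Psi^*(\cdot,\lambda)=e^{-\lambda(t-s)}\Psi^*(\cdot,\lambda)$.

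\emph{Part ii).} The key observation is that the ground state has $\lambda_0=0$ (from \eqref{Bessel_eigenvalue}) and $\Psi^*_0\equiv1$, so $c_0=\int_0^\infty x\,\pi(x)\,dx=\mathbb{E}_\pi[X]$; by stationarity $\mathbb{E}[X(t)]=\mathbb{E}[X(s)]=c_0$ and $\mathrm{Var}(X(t))=\mathrm{Var}(X(s))=\mathrm{Var}_\pi(X)$. I would then compute the mixed second moment by the tower property and part i),
\[
\mathbb{E}[X(t)X(s)]=\mathbb{E}\big[X(s)\,\mathbb{E}[X(t)\mid X(s)]\big]=\sum_{n\ge0}c_n e^{-\lambda_n(t-s)}\,\mathbb{E}\big[X(s)\Psi^*_n(X(s))\big]+\int_{\mu^2/2}^{\infty}c(\lambda)e^{-\lambda(t-s)}\,\mathbb{E}\big[X(s)\Psi^*(X(s),\lambda)\big]\,d\lambda ,
\]
and then use once more that $X(s)\sim\pi$, so that $\mathbb{E}[X(s)\Psi^*_n(X(s))]=c_n$ and $\mathbb{E}[X(s)\Psi^*(X(s),\lambda)]=c(\lambda)$. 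This yields $\mathbb{E}[X(t)X(s)]=\sum_{n\ge0}c_n^2e^{-\lambda_n(t-s)}+\int_{\mu^2/2}^{\infty}c(\lambda)^2e^{-\lambda(t-s)}\,d\lambda$; structurally this is just $\langle\iota,T_{t-s}\iota\rangle_{L^2(\pi)}$ evaluated through the spectral measure of $\iota$, which carries mass $c_n^2$ at each $\lambda_n$ and density $c(\lambda)^2$ on $[\mu^2/2,\infty)$. Subtracting $\mathbb{E}[X(t)]\mathbb{E}[X(s)]=c_0^2$ removes exactly the $n=0$ term, leaving $\mathrm{Cov}(X(t),X(s))=\sum_{n\ge1}c_n^2e^{-\lambda_n(t-s)}+\int_{\mu^2/2}^{\infty}c(\lambda)^2e^{-\lambda(t-s)}\,d\lambda$; in particular, setting $t=s$ identifies $\mathrm{Var}_\pi(X)=\sum_{n\ge1}c_n^2+\int_{\mu^2/2}^{\infty}c(\lambda)^2\,d\lambda$. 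Finally, normalizing the covariance by the stationary variance $\mathrm{Var}_\pi(X)$ — which \eqref{stat_dist_Bessel} identifies as the variance $(\nu+1)/(2\mu^2)$ of the gamma law — gives the correlation $\mathrm{Corr}(X(t),X(s))$.

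\emph{Main obstacle.} The genuinely delicate steps are the interchanges of the $x$-integration (and, in part ii), of the expectation over $X(s)$) with the discrete sum and, above all, with the continuous-spectrum integral on $[\mu^2/2,\infty)$. For the sum this is the kind of estimate already carried out in Lemma \ref{Lemma1FBP}: since $t\ge s$ one has $e^{-\lambda_n(t-s)}\le1$, and the Laguerre/Gamma asymptotics supply an $O(n^{-p})$ majorant with $p>1$. For the continuous part one must verify that $c(\lambda)=\int_0^\infty x\,\Psi^*(x,\lambda)\pi(x)\,dx$ is finite and that $\lambda\mapsto c(\lambda)^2e^{-\lambda(t-s)}$ is integrable over $[\mu^2/2,\infty)$; this is handled exactly as in the proof of Lemma \ref{Lemma2FBP}, via the Whittaker-function asymptotics as $\lambda\to\mu^2/2$ and as $\lambda\to+\infty$, a Fubini--Tonelli argument on the nonnegative integrand $x\,|\Psi^*(x,\lambda)|\,\pi(x)$ to justify swapping the order of integration, and Cauchy--Schwarz in $L^2(\pi)$ (Parseval) to control the series/integral tails when the expectation is pulled inside. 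A secondary but essential bookkeeping point is to record that $\lambda_0=0$, $\Psi^*_0\equiv1$ and therefore $c_0=\mathbb{E}_\pi[X]$: this is what makes the $n=0$ term equal $\mathbb{E}[X(t)]\mathbb{E}[X(s)]$ and thus converts $\mathbb{E}[X(t)X(s)]-\mathbb{E}[X(t)]\mathbb{E}[X(s)]$ into the reduced expansion over $n\ge1$ together with the continuous integral.
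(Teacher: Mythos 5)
Your proposal follows essentially the same route as the paper: part i) by expanding $\int x\,p(x,t-s;y)\,dx$ through the symmetric spectral representation \eqref{spec_rep_conv_form}, and part ii) by the tower property together with $\lambda_0=0$, $\Psi^*_0\equiv 1$, $c_0=\mathbb{E}[X(0)]$, then normalizing by the stationary variance; your added care about the Fubini/dominated-convergence interchanges only makes explicit what the paper leaves implicit. One small point: the gamma law \eqref{stat_dist_Bessel} has variance $(\nu+1)/(2\mu^2)$ as you correctly note, so normalizing the covariance would give the reciprocal prefactor $\mu^{2}/(\nu/2+1/2)$ rather than the $\mu^{-2}(\nu/2+1/2)$ appearing in the statement --- a discrepancy inherited from the paper's own assertion $\mathrm{Var}(X(0))=\mu^{2}/(\nu/2+1/2)$, not a flaw in your argument.
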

\begin{proof}
Taking into account spectral representation \eqref{spec_rep_conv_form}, together with the fact that $(X(t), t \geq 0)$ is a square integrable stochastic process (under the specified parameter values), we have
\begin{align*}
	\mathbb{E}[X(t)\mid X(s)=y] 
	&= \int x\, p(x,t;y)\, dx \\
	&= \int x\, \pi(x) \left( 
	\sum_{n=0}^{\infty} \Psi^*_n(x)\Psi^*_n(y)e^{-\lambda_n t}
	+ \int_{\mu^2/2}^{\infty} \Psi^*(x, \lambda)\Psi^*(y, \lambda)e^{-\lambda t}\, d\lambda 
	\right) dx \\
	&= \sum_{n=0}^{\infty} \left( \int x \Psi^*_n(x)\pi(x)\, dx \right)
	\Psi^*_n(y) e^{-\lambda_n (t-s)} \\
	&\quad + \int_{\mu^2/2}^{\infty} \left( \int x \Psi^*(x, \lambda)\pi(x)\, dx \right)
	\Psi^*(y, \lambda) e^{-\lambda (t-s)}\, d\lambda \\
	&= \sum_{n=0}^{\infty} c_n\, \Psi^*_n(y) e^{-\lambda_n (t-s)} 
	+ \int_{\mu^2/2}^{\infty} c(\lambda)\, \Psi^*(y, \lambda) e^{-\lambda (t-s)}\, d\lambda
\end{align*}
which proves formula $i)$. Next, since the process is stationary, by the law ot total expectation (also known as tower property) we have:
\begin{align*}
\mathbb{E}[X(t)X(s)]&=\mathbb{E}\left[X(s) \mathbb{E}[X(t) | X(s)]\right] \\
           &=\mathbb{E}\left[X(s)\left(\sum_{n=0}^{\infty} c_n\Psi^*_n(X(s))e^{-\lambda_n (t-s)} + \int_{\mu^2/2}^{\infty}c(\lambda)\Psi^*(X(s), \lambda)e^{-\lambda (t-s)} d\lambda  \right)\right] \\
           &=\sum_{n=0}^{\infty} c_n\mathbb{E}[X(0)\Psi^*_n(X(0))]e^{-\lambda_n (t-s)} + \int_{\mu^2/2}^{\infty}c(\lambda)\mathbb{E}[X(0)\Psi^*(X(0), \lambda)]e^{-\lambda (t-s)} d\lambda .
\end{align*}
Taking into account that $\lambda_0=0$, $\Psi^*_0(\cdot)=1$, and 
\begin{equation*}
c_0=\int x \Psi^*_0(x)\pi(x)dx=\int x \pi(x)dx =\mathbb{E}[X(0)],
\end{equation*}
we have
\begin{equation*}
c_0\mathbb{E}[X(0)\Psi^*_0(X(0))]e^{-\lambda_0 (t-s)}=\mathbb{E}[X(0)]^2.
\end{equation*}
Since 
\begin{equation*}
\mathrm{Var}(X(0))=\mu^2 / \left(\nu/2+1/2\right), \quad c_n=\mathbb{E}[X(0)\Psi^*_n(X(0))], \quad c(\lambda)=\mathbb{E}[X(0)\Psi^*(X(0), \lambda)],
\end{equation*} we finally arrive at
\begin{align*}
	\mathrm{Corr}(X(t),X(s))&=\mu^{-2} \left(\frac{\nu}{2}+\frac{1}{2}\right) \biggl[\sum_{n=1}^{\infty} c_n \mathbb{E}[X(0)\Psi^*_n(X(0))]e^{-\lambda_n(t-s)} + \nonumber \\  &+\int_{\mu^2/2}^{\infty}c(\lambda)\mathbb{E}[X(0)\Psi^*(X(0), \lambda)]e^{-\lambda (t-s)} d\lambda\biggr], \\
	&=\mu^{-2} \left(\frac{\nu}{2}+\frac{1}{2}\right) \biggl[\sum_{n=1}^{\infty} c^2_n e^{-\lambda_n(t-s)} +\int_{\mu^2/2}^{\infty}c^2(\lambda)e^{-\lambda (t-s)} d\lambda\biggr],
\end{align*}
which proves $ii)$.
\end{proof}
To the best of our knowledge, the closed-form formula \eqref{corr_Bess_cons_drift} for the correlation function of the Bessel process with constant drift is not available in the existing literature and is presented here as new.
Now we are ready to calculate correlation structure of the fractional counterpart of the process.

\begin{theorem}
	Let $\mu<0$, $\nu \in \langle -1, +\infty \rangle \backslash \{-1/2\}$ and let $(X(t), \, t \geq 0)$ be a stationary solution of SDE \eqref{Bessel_SDE}. Then for any $t \geq s$ and $y>0$:
\begin{align}
				\mathrm{Corr}(X_{\alpha}(t),X_{\alpha}(s))&=\mu^{-2} \left(\frac{\nu}{2}+\frac{1}{2}\right) \biggl[\sum_{n=1}^{\infty} c^2_n \left(\mathcal{E}_{\alpha}(-\lambda_n t^{\alpha})+\frac{\lambda_n \alpha t^\alpha}{\Gamma(1+\alpha)}\int_{0}^{s/t}\frac{\mathcal{E}_{\alpha}(-\lambda_n t^{\alpha}(1-z)^{\alpha})}{z^{1-\alpha}}dz\right) + \nonumber \\ \label{corr_frac_Bess_cons_drift} &+\int_{\mu^2/2}^{\infty}c^2(\lambda)\left(\mathcal{E}_{\alpha}(-\lambda t^{\alpha})+\frac{\lambda \alpha t^\alpha}{\Gamma(1+\alpha)}\int_{0}^{s/t}\frac{\mathcal{E}_{\alpha}(-\lambda t^{\alpha}(1-z)^{\alpha})}{z^{1-\alpha}}dz\right) d\lambda\biggr],
\end{align}
	where
	\begin{equation}
		c_n=\int_{\mathbb{R}} x \Psi^*_n(x)\pi(x)dx, \quad c(\lambda)=\int_{\mathbb{R}}x\Psi^*(x,\lambda)\pi(x) dx.
	\end{equation}
\end{theorem}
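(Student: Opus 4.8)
The plan is to reduce the computation to a single fact about the inverse stable subordinator $E$ and then to reuse the spectral second-moment identity from the proof of the preceding theorem. Since $X$ and $E$ are independent and $E$ has a.s.\ nondecreasing paths with $E(s)\le E(t)$ for $s\le t$, I would first condition on the $\sigma$-field $\sigma(E)$ generated by the whole path of $E$: conditionally on $\sigma(E)$, the pair $(X_\alpha(s),X_\alpha(t))=(X(E(s)),X(E(t)))$ is the stationary non-fractional Bessel process observed at the deterministic times $v:=E(s)\le u:=E(t)$, so the second-moment computation in the proof of the preceding theorem, applied with lag $u-v$, gives
\[
\mathbb{E}\bigl[X_\alpha(t)X_\alpha(s)\,\big|\,\sigma(E)\bigr]=\mathbb{E}[X(0)]^2+\sum_{n\ge1}c_n^2\,e^{-\lambda_n(E(t)-E(s))}+\int_{\mu^2/2}^{\infty}c^2(\lambda)\,e^{-\lambda(E(t)-E(s))}\,d\lambda .
\]
Taking expectations, interchanging $\mathbb{E}$ with the sum and the $\lambda$-integral (legitimate by Tonelli: all summands are nonnegative and $\sum_{n\ge1}c_n^2+\int_{\mu^2/2}^{\infty}c^2(\lambda)\,d\lambda=\mathrm{Var}(X(0))<\infty$ by Parseval for \eqref{spec_rep_conv_form}), and subtracting $\mathbb{E}[X_\alpha(0)]^2=\mathbb{E}[X(0)]^2$ (as $X_\alpha(0)=X(0)$), I get
\[
\mathrm{Cov}(X_\alpha(t),X_\alpha(s))=\sum_{n\ge1}c_n^2\,\mathbb{E}\bigl[e^{-\lambda_n(E(t)-E(s))}\bigr]+\int_{\mu^2/2}^{\infty}c^2(\lambda)\,\mathbb{E}\bigl[e^{-\lambda(E(t)-E(s))}\bigr]\,d\lambda .
\]

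The core step is the two-point Laplace transform of the inverse subordinator: for every $\theta\ge0$ and $0<s\le t$,
\[
\mathbb{E}\bigl[e^{-\theta(E(t)-E(s))}\bigr]=\mathcal{E}_{\alpha}(-\theta t^{\alpha})+\frac{\theta\alpha t^{\alpha}}{\Gamma(1+\alpha)}\int_{0}^{s/t}\frac{\mathcal{E}_{\alpha}\bigl(-\theta t^{\alpha}(1-z)^{\alpha}\bigr)}{z^{1-\alpha}}\,dz ,
\]
which is exactly the kernel already appearing in Theorem \ref{Corr_structure_frac_transfBessel} (note $\Gamma(1+\alpha)=\alpha\Gamma(\alpha)$). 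I would establish this either by citing the relevant statement of \cite{patie_srapionyan_2021} (Theorem 2.2 / Example 3.1), already invoked in the proof of Theorem \ref{Corr_structure_frac_transfBessel}, or directly: the strong Markov property of the subordinator $D$ at the $D$-stopping time $E(s)$ makes $E(t)-E(s)$, conditionally on the overshoot $O_s=D(E(s))-s$, an independent inverse stable subordinator evaluated at time $(t-s-O_s)^{+}$, and integrating $\mathcal{E}_{\alpha}\bigl(-\theta((t-s-O_s)^{+})^{\alpha}\bigr)$ against the scaled Dynkin--Lamperti law of $O_s$ produces the Beta-type integral above. As sanity checks, the right-hand side reduces to $\mathcal{E}_\alpha(-\theta t^\alpha)=\mathbb{E}[e^{-\theta E(t)}]$ as $s\downarrow 0$, and to $1$ when $s=t$, the latter being the integrated form of the fractional relaxation equation satisfied by $t\mapsto\mathcal{E}_\alpha(-\theta t^\alpha)$.

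Substituting $\theta=\lambda_n$ and $\theta=\lambda$ into the covariance expression and dividing by $\sqrt{\mathrm{Var}(X_\alpha(t))\,\mathrm{Var}(X_\alpha(s))}$ then yields \eqref{corr_frac_Bess_cons_drift}. Conditioning on $E(u)$ and using stationarity of $X$ shows that each marginal $X_\alpha(u)$ has law $\pi$, so this normalizing factor is the constant $\mathrm{Var}(X(0))$, whose reciprocal is the prefactor $\mu^{-2}(\nu/2+1/2)$ read off from the gamma density \eqref{stat_dist_Bessel} exactly as in the preceding theorem; the constants $c_n, c(\lambda)$ are literally those of that theorem, and the restriction $y>0$ enters only through admissibility of the continuous-spectrum functions, as in Theorem \ref{fracBesselprocess_trans_theorem}.

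The main obstacle is the two-point Laplace transform for a \emph{general} parameter $\theta$, in particular throughout the continuum $\lambda\ge\mu^2/2$, together with the attendant justification that the Mittag-Leffler kernel may be pulled through the spectral integral $\int_{\mu^2/2}^{\infty}c^2(\lambda)(\cdots)\,d\lambda$; once this is secured, the rest is bookkeeping resting on the preceding theorem and the conditioning identity. An alternative that avoids the conditioning step is to expand the identity map in the orthonormal spectral basis, $x=c_0+\sum_{n\ge1}c_n\Psi^{*}_n(x)+\int c(\lambda)\Psi^{*}(x,\lambda)\,d\lambda$ in $L^2(\pi)$, and apply Theorem \ref{Corr_structure_frac_transfBessel} term by term on the discrete part together with its continuous-spectrum counterpart, the cross terms vanishing by orthogonality of $\{\Psi^{*}_n\}$ and $\{\Psi^{*}(\cdot,\lambda)\}$; this route still requires the same Patie--Srapionyan-type input for the continuous part of the spectrum.
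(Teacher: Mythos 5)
Your proposal is correct and follows essentially the same route as the paper: both arguments condition on the time change $(E(s),E(t))$ (the paper phrases this as Lebesgue--Stieltjes integration of $\mathrm{Corr}(X(u),X(v))$ against the bivariate law $H(du,dv)$ of $(E(t),E(s))$), observe that the stationary correlation of the base process enters only through $e^{-\theta|u-v|}$, invoke the identity $\mathbb{E}\bigl[e^{-\theta(E(t)-E(s))}\bigr]=\mathcal{E}_{\alpha}(-\theta t^{\alpha})+\tfrac{\theta\alpha t^{\alpha}}{\Gamma(1+\alpha)}\int_{0}^{s/t}z^{\alpha-1}\mathcal{E}_{\alpha}(-\theta t^{\alpha}(1-z)^{\alpha})\,dz$ (which the paper cites from Theorem 3.1 of Leonenko--Meerschaert--Sikorskii rather than rederiving via Dynkin--Lamperti), and finish with Fubini--Tonelli over the spectral sum and integral using the Parseval bound. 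The only differences are cosmetic: your optional direct derivation of the two-point Laplace transform and the orthogonal-expansion alternative go beyond what the paper does, which simply cites the known formula.
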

\begin{proof}
The proof is based on the technique first used in [Theorem 3.1, \cite{LeonenkoMeerschaertSikorskii_CSFPD_2013}]. In particular, we rely on the Lebesgue-Stieltjes integration formula
\begin{equation}\label{Leb-Stie-Biv}
\mathrm{Corr}(X_{\alpha}(t),X_{\alpha}(s))=\int_{0}^{\infty}\int_{0}^{\infty}\mathrm{Corr}(X(u), X(v))H(du,dv)
\end{equation} with respect to bivariate distribution function 
\begin{equation*}
H(u,v)=P(E(t) \leq u , E(s) \leq v).
\end{equation*}
Observe that the time components $u$ and $v$ in $\mathrm{Corr}(X(u),X(v))$ appear only through an exponential expression of the form  $e^{-\theta|u-v|}$.
According to [Theorem 3.1, \cite{LeonenkoMeerschaertSikorskii_CSFPD_2013}]
\begin{equation}\label{Leb-Stie-Biv2}
\int_{0}^{\infty}\int_{0}^{\infty}e^{-\theta |u-v|}H(du,dv)=\left(\mathcal{E}_{\alpha}(-\theta t^{\alpha})+\frac{\theta \alpha t^\alpha}{\Gamma(1+\alpha)}\int_{0}^{s/t}\frac{\mathcal{E}_{\alpha}(-\theta t^{\alpha}(1-z)^{\alpha})}{z^{1-\alpha}}dz\right).
\end{equation}
Taking into account \eqref{corr_Bess_cons_drift} and  \eqref{Leb-Stie-Biv2}, a straightforward application of Fubini-Tonelli theorem in \eqref{Leb-Stie-Biv} yields the desired formula \eqref{corr_frac_Bess_cons_drift}.
\end{proof}
\begin{remark}
If $t=s$ both formulas \eqref{corr_Bess_cons_drift} and \eqref{corr_frac_Bess_cons_drift} must yield the value $1$. To see this, simply recall Parseval's theorem in the context of spectral theory for self-adjoint operator $\mathcal{G}$ in $L^{2}\left(\left[ 0, +\infty\right>, \mathfrak{m}\right)$, which ensures
\begin{equation}\label{Parseval_iden}
\mathbb{E}[X(0)^2]= \sum_{n=0}^{\infty} c^2_n + \int_{\mu^2/2}^{\infty}c(\lambda)^2 d\lambda
\end{equation}
(for details, see e.g. \cite{linetsky2004spectral}). Therefore
\begin{align*}
\mathrm{Corr}(X(t), X(t))&=\mu^{-2} \left(\frac{\nu}{2}+\frac{1}{2}\right) \biggl[\sum_{n=1}^{\infty} c^2_ne^{-\lambda_n(t-t)} + \int_{\mu^2/2}^{\infty}c(\lambda)^2e^{-\lambda (t-t)} d\lambda\biggr] \\
&=\mu^{-2} \left(\frac{\nu}{2}+\frac{1}{2}\right) \biggl[\sum_{n=0}^{\infty} c^2_n + \int_{\mu^2/2}^{\infty}c(\lambda)^2 d\lambda-c_0^2\biggr] \\
&=\mu^{-2} \left(\frac{\nu}{2}+\frac{1}{2}\right) \biggl [\mathbb{E}[X(0)^2]-\mathbb{E}[X(0)]^2\biggr] \\
&=\mu^{-2} \left(\frac{\nu}{2}+\frac{1}{2}\right) \mathrm{Var}(X(0)) \\
&=1.
\end{align*}
In the fractional case we have (see [Remark 3.2., \cite{LeonenkoMeerschaertSikorskii_CSFPD_2013}])
\begin{equation*}
\frac{\theta \alpha t^\alpha}{\Gamma(1+\alpha)}\int_{0}^{1}\frac{\mathcal{E}_{\alpha}(-\theta t^{\alpha}(1-z)^{\alpha})}{z^{1-\alpha}}dz=1-\mathcal{E}_{\alpha}(-\theta t^{\alpha}),
\end{equation*}
so that
\begin{align*}
	\mathrm{Corr}(X_{\alpha}(t),X_{\alpha}(s))&=\mu^{-2} \left(\frac{\nu}{2}+\frac{1}{2}\right) \biggl[\sum_{n=1}^{\infty} c^2_n \left(\mathcal{E}_{\alpha}(-\lambda_n t^{\alpha})+1-\mathcal{E}_{\alpha}(-\lambda_n t^{\alpha})\right) + \nonumber \\  &+\int_{\mu^2/2}^{\infty}c^2(\lambda)\left(\mathcal{E}_{\alpha}(-\lambda t^{\alpha})+1-\mathcal{E}_{\alpha}(-\lambda t^{\alpha})\right) d\lambda\biggr] \\
	&=\mu^{-2} \left(\frac{\nu}{2}+\frac{1}{2}\right) \biggl[\sum_{n=1}^{\infty} c^2_n  +\int_{\mu^2/2}^{\infty}c^2(\lambda) d\lambda\biggr] \\
	&=1.
\end{align*}
\end{remark}
Now that we have obtained explicit formulas for the correlation structure of fractional Bessel process, we proceed by showing the correlation function falls of like a power law of order $0 <\alpha <1$, which corresponds to the order of the Caputo fractional derivative. In other words, the fractional Bessel process exhibits long-range dependence property.
\begin{theorem}
Let $\mu<0$, $\nu \in \langle -1, +\infty \rangle \backslash \{-1/2\}$ and let $(X(t), \, t \geq 0)$ be a stationary solution of the SDE \eqref{Bessel_SDE}. Then, for any fixed $s>0$ as $t \to \infty$:
\begin{itemize}
	\item[i)]\begin{equation} 
		\mathrm{Corr}(X(t),X(s)) \sim \mu^{-2} \left(\frac{\nu}{2}+\frac{1}{2}\right) c_1 e^{-\lambda_1(t-s)},
	\end{equation}
	\item[ii)]\begin{equation} 
		\mathrm{Corr}(X_{\alpha}(t),X_{\alpha}(s)) \sim  \frac{1}{t^{\alpha}\Gamma(1-\alpha)}\left(\mu^{-2} \left(\frac{\nu}{2}+\frac{1}{2}\right)\left(\sum_{n=1}^{\infty} \frac{c_n^2}{\lambda_n}+\int_{\mu^2/2}^{\infty}\frac{c^2(\lambda)}{\lambda}\right)+ \frac{s^{\alpha}}{\Gamma(1+\alpha)}\right).
	\end{equation}
\end{itemize}
\end{theorem}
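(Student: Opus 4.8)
The strategy is to analyze the two spectral formulas from the previous theorem — the exponential-decay formula \eqref{corr_Bess_cons_drift} for the non-fractional process and the Mittag-Leffler formula \eqref{corr_frac_Bess_cons_drift} for the fractional one — in the regime $t \to \infty$ with $s$ fixed, extracting the dominant term in each case. For part $i)$, I would factor $e^{-\lambda_1(t-s)}$ out of the discrete sum and argue that every other term is exponentially smaller: the $n=1$ term dominates the rest of the series because $\lambda_n$ is strictly increasing in $n$, and the continuous integral $\int_{\mu^2/2}^{\infty} c^2(\lambda) e^{-\lambda(t-s)}\,d\lambda$ decays like $e^{-(\mu^2/2)(t-s)}$, which is also negligible compared to $e^{-\lambda_1(t-s)}$ provided $\lambda_1 < \mu^2/2$ — a fact that follows from \eqref{Bessel_eigenvalue} since $\lambda_1 = \tfrac{\mu^2}{2}\bigl(1-(\tfrac{\nu+1/2}{\nu+3/2})^2\bigr) < \mu^2/2$. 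Dominated convergence (justified by the Parseval bound \eqref{Parseval_iden}) lets me interchange the limit with the sum/integral after the rescaling, leaving only the $c_1^2$ term; note the statement writes $c_1$, which I read as $c_1^2$ up to the square, consistent with \eqref{corr_Bess_cons_drift}.

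For part $ii)$, the key input is the large-argument asymptotic $\mathcal{E}_{\alpha}(-x) \sim 1/(\Gamma(1-\alpha)x)$ already invoked in the proof of Lemma \ref{Lemma2FBP} (citing \cite{Mittag2011}). I would apply it term-by-term inside \eqref{corr_frac_Bess_cons_drift}: the standalone factor $\mathcal{E}_{\alpha}(-\lambda_n t^{\alpha})$ contributes $\sim \tfrac{1}{\Gamma(1-\alpha)\lambda_n t^{\alpha}}$, and after multiplying by $c_n^2$ and summing (convergence of $\sum c_n^2/\lambda_n$ being guaranteed by $\sum c_n^2 < \infty$ together with $\lambda_n \uparrow \mu^2/2 > 0$, so $1/\lambda_n$ is bounded) this yields the first block $\mu^{-2}(\tfrac{\nu}{2}+\tfrac12)\bigl(\sum c_n^2/\lambda_n + \int c^2(\lambda)/\lambda\bigr)$ divided by $t^{\alpha}\Gamma(1-\alpha)$. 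For the integral-against-$z$ term, I would substitute and use the identity from the earlier Remark, $\tfrac{\theta\alpha t^{\alpha}}{\Gamma(1+\alpha)}\int_0^1 \tfrac{\mathcal{E}_{\alpha}(-\theta t^{\alpha}(1-z)^{\alpha})}{z^{1-\alpha}}\,dz = 1 - \mathcal{E}_{\alpha}(-\theta t^{\alpha})$, to rewrite $\int_0^{s/t}$ as $\int_0^1 - \int_{s/t}^1$; as $t\to\infty$ the $s/t$-truncation makes $\int_0^{s/t}\tfrac{\mathcal{E}_\alpha(\cdots)}{z^{1-\alpha}}dz \sim \tfrac{(s/t)^{\alpha}}{\alpha}$ since the Mittag-Leffler factor $\to 1$ on that shrinking interval, and the prefactor $\tfrac{\theta\alpha t^{\alpha}}{\Gamma(1+\alpha)}$ then produces $\tfrac{\theta s^{\alpha}}{\Gamma(1+\alpha)}$; after multiplying by $c_n^2$ (resp.\ $c^2(\lambda)$), summing/integrating, and recalling that $\mu^{-2}(\tfrac{\nu}{2}+\tfrac12)\bigl(\sum c_n^2 + \int c^2(\lambda)\bigr)$ relates to $\Var(X(0))$ up to the $c_0^2$ term via \eqref{Parseval_iden}, this contributes the $\tfrac{s^{\alpha}}{\Gamma(1+\alpha)}$ piece inside the bracket — but one must track the $\theta = \lambda_n$ (resp.\ $\lambda$) weighting carefully so that it cancels against the normalization, which is exactly where the bookkeeping is delicate.

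The main obstacle I anticipate is rigorously justifying the interchange of the $t\to\infty$ limit with the infinite sum and the improper $\lambda$-integral in part $ii)$, and obtaining a \emph{uniform} asymptotic for $\mathcal{E}_{\alpha}(-\theta t^{\alpha})$ that is valid simultaneously for all $\theta \in \{\lambda_n\} \cup [\mu^2/2,\infty)$. Because $\lambda_n \to \mu^2/2$ the eigenvalues stay bounded away from $0$, so the bounds \eqref{Mittag_Leffler_unif_bounds} give $\mathcal{E}_{\alpha}(-\theta t^{\alpha}) \le \Gamma(1+\alpha)/(\theta t^{\alpha}) \le \Gamma(1+\alpha)/((\mu^2/2) t^{\alpha})$, which furnishes a dominating function of the form $(\text{const})\cdot c_n^2 / t^{\alpha}$, and an analogous bound for the $z$-integral term using $\mathcal{E}_{\alpha} \le 1$; combined with $\sum c_n^2 < \infty$ and $\int c^2(\lambda)\,d\lambda < \infty$ from \eqref{Parseval_iden}, dominated convergence applies after multiplying through by $t^{\alpha}$. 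The remaining care is purely in matching constants — ensuring the two contributions ($\mathcal{E}_{\alpha}$-term and $z$-integral term) assemble into precisely the stated bracket $\mu^{-2}(\tfrac{\nu}{2}+\tfrac12)(\sum c_n^2/\lambda_n + \int c^2(\lambda)/\lambda) + s^{\alpha}/\Gamma(1+\alpha)$ — and in noting that part $i)$ is the simpler $\alpha = 1$ shadow of this computation with exponential decay replacing the power law.
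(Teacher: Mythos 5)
Your overall route coincides with the paper's: part $i)$ is handled by dividing by the candidate leading term $\mu^{-2}(\tfrac{\nu}{2}+\tfrac12)c_1^2 e^{-\lambda_1(t-s)}$ and removing the remainder by dominated convergence (using that $(\lambda_n)$ is increasing and that $\lambda_1<\mu^2/2$ controls the continuous part), and part $ii)$ rests on the $t\to\infty$ asymptotics of $\mathcal{E}_{\alpha}(-\theta t^{\alpha})+\tfrac{\theta\alpha t^{\alpha}}{\Gamma(1+\alpha)}\int_0^{s/t}\mathcal{E}_{\alpha}(-\theta t^{\alpha}(1-z)^{\alpha})z^{\alpha-1}\,dz$ applied termwise under exactly the dominations you describe (the bounds \eqref{Mittag_Leffler_unif_bounds} together with $\lambda_n\geq\lambda_1>0$ and Parseval). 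Your reading of $c_1$ as $c_1^2$ in $i)$ is also the correct one. The only structural difference is that the paper simply quotes the combined asymptotic $\sim \tfrac{1}{t^{\alpha}\Gamma(1-\alpha)}\bigl(\tfrac{1}{\theta}+\tfrac{s^{\alpha}}{\Gamma(1+\alpha)}\bigr)$ from Leonenko--Meerschaert--Sikorskii, whereas you attempt to rederive it.

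That rederivation contains a genuine error. On the shrinking interval $z\in[0,s/t]$ the argument of the Mittag-Leffler function is $-\theta t^{\alpha}(1-z)^{\alpha}\approx-\theta t^{\alpha}\to-\infty$, so the factor $\mathcal{E}_{\alpha}(-\theta t^{\alpha}(1-z)^{\alpha})$ does \emph{not} tend to $1$; it decays like $1/(\Gamma(1-\alpha)\theta t^{\alpha})$. It is precisely this decay that cancels the $\theta t^{\alpha}$ in the prefactor and yields the $\theta$-independent contribution $\tfrac{s^{\alpha}}{\Gamma(1+\alpha)\Gamma(1-\alpha)\,t^{\alpha}}$; summing this against $c_n^2$ (resp.\ $c^2(\lambda)$) and using that $\mu^{-2}(\tfrac{\nu}{2}+\tfrac12)\bigl(\sum_{n\ge1}c_n^2+\int_{\mu^2/2}^{\infty} c^2(\lambda)\,d\lambda\bigr)=1$ (the Parseval identity \eqref{Parseval_iden}, as in the Remark showing $\mathrm{Corr}(X(t),X(t))=1$) produces the stated $s^{\alpha}/\Gamma(1+\alpha)$ inside the bracket. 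With your intermediate value $\theta s^{\alpha}/\Gamma(1+\alpha)$ the term would not decay in $t$ at all and would carry the weights $\sum c_n^2\lambda_n$ and $\int c^2(\lambda)\lambda\,d\lambda$ (the latter of unclear convergence), contradicting the claimed $t^{-\alpha}$ rate --- so the ``delicate bookkeeping'' you defer is exactly where the argument breaks. The fix is immediate: apply the large-argument asymptotic $\mathcal{E}_{\alpha}(-x)\sim 1/(\Gamma(1-\alpha)x)$ \emph{inside} the $z$-integral as well, or simply cite the combined asymptotic as the paper does.
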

\begin{proof}
Recall that $\left(\lambda_n, \, n \in \mathbb{N}_0\right)$ is an increasing sequence of eigenvalues and due to assumption we have $E[X(0)^2] < \infty$ so that summands in \eqref{Parseval_iden} are finite. Therefore, from \eqref{corr_Bess_cons_drift} we have

\begin{align*}
\lim_{t \to \infty}\frac{\mathrm{Corr}(X(t),X(s))}{\mu^{-2} \left(\frac{\nu}{2}+\frac{1}{2}\right) c_1 e^{-\lambda_1(t-s)}}&=\lim_{t \to \infty}\left[1+\sum_{n=2}^{\infty}c_n^2 e^{-\left(\lambda_n-\lambda_1\right)\left(t-s\right)}+\int_{\mu^2/2}^{\infty}c^2(\lambda)e^{-\left(\lambda-\lambda_1\right)\left(t-s\right)} d \lambda\right] \\
 &=1,
\end{align*}
where the second equality is justified by dominated convergence argument. This proves $i)$. 

Next, we prove the second asymptotic formula. Taking into account Theorem 3.1. and Remarks 3.2. and 3.3. from \cite{LeonenkoMeerschaertSikorskii_CSFPD_2013}, for $\theta>0$ and fixed $s>0$ we have:
\begin{equation*}
\mathcal{E}_{\alpha}(-\theta t^{\alpha})+\frac{\theta  \alpha t^\alpha}{\Gamma(1+\alpha)}\int_{0}^{s/t}\frac{\mathcal{E}_{\alpha}(-\theta  t^{\alpha}(1-z)^{\alpha})}{z^{1-\alpha}}dz \sim \frac{1}{t^{\alpha}\Gamma(1-\alpha)}\left(\frac{1}{\theta}+\frac{s^{\alpha}}{\Gamma(\alpha+1)}\right), \quad t \to \infty
\end{equation*}
and for any $t\geq s>0$
\begin{equation*} \mathcal{E}_{\alpha}(-\theta t^{\alpha})+\frac{\theta  \alpha t^\alpha}{\Gamma(1+\alpha)}\int_{0}^{s/t}\frac{\mathcal{E}_{\alpha}(-\theta  t^{\alpha}(1-z)^{\alpha})}{z^{1-\alpha}}dz  \leq 1.
\end{equation*}
Now, \eqref{corr_frac_Bess_cons_drift} together with dominated convergence theorem yields
\begin{align*}
	\mathrm{Corr}(X_{\alpha}(t),X_{\alpha}(s))&=\mu^{-2} \left(\frac{\nu}{2}+\frac{1}{2}\right) \biggl[\sum_{n=1}^{\infty} c^2_n \left(\mathcal{E}_{\alpha}(-\lambda_n t^{\alpha})+\frac{\lambda_n \alpha t^\alpha}{\Gamma(1+\alpha)}\int_{0}^{s/t}\frac{\mathcal{E}_{\alpha}(-\lambda_n t^{\alpha}(1-z)^{\alpha})}{z^{1-\alpha}}dz\right)+ \\  &+\int_{\mu^2/2}^{\infty}c^2(\lambda)\left(\mathcal{E}_{\alpha}(-\lambda t^{\alpha})+\frac{\lambda \alpha t^\alpha}{\Gamma(1+\alpha)}\int_{0}^{s/t}\frac{\mathcal{E}_{\alpha}(-\lambda t^{\alpha}(1-z)^{\alpha})}{z^{1-\alpha}}dz\right) d\lambda\biggr] \\
	&\sim \mu^{-2} \left(\frac{\nu}{2}+\frac{1}{2}\right) \biggl[\sum_{n=1}^{\infty}  \frac{1}{t^{\alpha}\Gamma(1-\alpha)}\left(\frac{1}{\lambda_n}+\frac{s^{\alpha}}{\Gamma(\alpha+1)}\right) c^2_n+ \\  &+\int_{\mu^2/2}^{\infty}\frac{1}{t^{\alpha}\Gamma(1-\alpha)}\left(\frac{1}{\lambda}+\frac{s^{\alpha}}{\Gamma(\alpha+1)}\right) c^2(\lambda) d\lambda\biggr], \quad t \to \infty. 
\end{align*}
Finally, taking into account \eqref{Parseval_iden}, we obtain
\begin{align*}
	\mathrm{Corr}(X_{\alpha}(t), X_{\alpha}(s)) 
	&\sim \mu^{-2} \left( \frac{\nu}{2} + \frac{1}{2} \right)
	\frac{1}{t^{\alpha} \Gamma(1 - \alpha)} \biggl(
	\sum_{n=1}^{\infty} \frac{c_n^2}{\lambda_n}
	+ \int_{\mu^2/2}^{\infty} \frac{c^2(\lambda)}{\lambda}\, d\lambda \\
	&\hspace{5em}
	+ \mathrm{Var}(X(0)) \frac{s^{\alpha}}{\Gamma(\alpha + 1)}
	\biggr), \quad t \to \infty,
\end{align*}
which proves $ii)$.
\end{proof}

The obtained analytical properties equip the fractional Bessel process with powerful modeling capabilities. In particular, its ability to capture long-range dependence and subdiffusive dynamics makes it a proper candidate for describing phenomena in applied fields such as queueing theory, finance, biology and physics. The flexibility offered by the fractional time-change opens the door to modeling systems influenced by memory effects or random delays, thereby extending the practical relevance of the classical Bessel process to a broader class of real-world scenarios. 

\section{Application to Queueing theory}\label{Applications}
In this section, we explain how the fractional Bessel process can be applied in queueing theory.

Time-fractional models of the form \eqref{fracBesselprocess} are preferred when there are random periods during which the process becomes trapped (i.e., temporarily halted) due to possibly unknown environmental forces. In queueing theory, this corresponds to scenarios where the server may malfunction and stop working for random intervals of time, such that the total unfinished work in the system remains unchanged until the server is repaired. Here, we assume that no new customers arrive during the repair period.

The fractional Bessel process introduced in the previous section is naturally connected to queueing theory. In particular, the (non-fractional) Bessel process arises as the heavy-traffic limit of the unfinished work in a so-called polling system. This system consists of two queues, each visited by a single server one at a time, with the server working until no work is left in the current queue. Switchover times from one queue to another are assumed to be nonzero (see \cite{Coffman1998} for details). Moreover, Coffman et al.~proved the Heavy-Traffic Averaging Principle (HTAP) for this model.

\begin{remark}
	It should be noted that the assumption of nonzero switchover times is essential for obtaining the Bessel process in the heavy-traffic regime. If this assumption is violated, the limiting process becomes a reflected Brownian motion instead (see \cite{coffman1995polling}). Therefore, the Bessel process serves as a cornerstone in modeling heavy-traffic regimes in queueing theory under nonzero switchover times.
\end{remark}

Next, we show how these results can be extended to a more general model, namely, to the fractional case. To that end, we first summarize the Coffman two-queue polling model with nonzero switchover times and its key assumptions and results.

We consider $n$ two-queue polling systems. For each system, we use the following notation for the $l$th queue, where $l \in \{1, 2\} $ (as in \cite{Coffman1998}):

\begin{itemize}
	\item \( \xi^{n,l}_k > 0 \): interarrival time for the \( k \)th customer to the \( l \)th queue,
	\item \( \tau^{n,l}_i = \sum_{k=1}^{i} \xi^{n,l}_k \): time of the \( i \)th arrival to the \( l \)th queue,
	\item \( \eta^{n,l}_i \): \( i \)th service time in the \( l \)th queue,
	\item \( s^{n,l}_i \): \( i \)th switchover time from the \( l \)th queue.
\end{itemize}

We assume that the sequences \( \{ \xi^{n,l}_k,\, k \in \mathbb{N} \} \), \( \{ \eta^{n,l}_k,\, k \in \mathbb{N} \} \), and \( \{ s^{n,l}_k,\, k \in \mathbb{N} \} \) are independent and identically distributed (i.i.d.) for each \( l \in \{1,2\} \). For the expected values of these random variables, we have:

\begin{equation*}
 \lambda^n_l=\frac{1}{\mathbb{E}\xi^{n,l}_1}, \quad \mu^n_l=\frac{1}{\mathbb{E}\eta^{n,l}_1}, \quad d^{n}_l=\mathbb{E}s^{n,l}_1, \quad \rho^n_l=\frac{\lambda^{n}_{l}}{\mu^{n}_l}, \quad \rho^n=\rho^n_1+\rho^n_2, \quad \sigma^n_l=\sqrt{\mathbb{E}\left(\eta^{n,l}_1-\rho^n_l \cdot \xi^{n,l}_1 \right)^2}.
\end{equation*}
Let
\begin{equation}\label{TotalUnfinished_n}
	V^{n}(t) = \frac{1}{\sqrt{n}} U^n_{nt}, \quad t \geq 0,
\end{equation}
where \( U^n_t \) denotes the total unfinished work in the \( n \)th system at time \( t \). It is assumed that the initial unfinished work \( U^n_0 \) is independent of the sequences \( \{ \xi^{n,l}_k,\, k \in \mathbb{N} \} \), \( \{ \eta^{n,l}_k,\, k \in \mathbb{N} \} \), and \( \{ s^{n,l}_k,\, k \in \mathbb{N} \} \) for \( l \in \{1,2\} \).

[Theorem 1, \cite{Coffman1998}] establishes that the scaled total unfinished work \eqref{TotalUnfinished_n} in the two-queue polling system can be approximated in the heavy-traffic regime by a Bessel process with constant drift. It turns out that this heavy-traffic approximation in the non-fractional case can be naturally extended to the fractional case.

Let
\begin{equation}\label{HTApproxProcesses}
	V^{n}_{\alpha}(t) = V^n(E(t)), \quad V_{\alpha}(t) = V(E(t)), \quad \alpha \in \langle 0, 1 \rangle,
\end{equation}
where \( V = \{V(t),\, t \geq 0\} \) is the solution of the stochastic differential equation
\[
dV(t) = \left( c + \frac{d}{V(t)} \right) dt + \sigma\, dB(t),
\]
and \( E(t) \) is the inverse of an \( \alpha \)-stable subordinator as defined in \eqref{InverseStabDef}, assumed to be independent of the processes \( V^n \) and \( V \).

\begin{theorem}\label{CoffmanHTApprox}
Assume that $V^n(0) \Rightarrow V_0$ for some nonnegative random variable $V_0$. Suppose \begin{equation*}
	\lambda^n_l \to \lambda_l, \quad \mu^n_l \to \mu_l, \quad \sigma^n_l \to \sigma_l, \quad d^n_l \to d_l, \quad n \to \infty,
\end{equation*}
and define \begin{equation*}
	\sigma^2=\lambda_1 \sigma^2_1 + \lambda_2 \sigma^2_2 >0, \quad \rho_l=\lambda_l/\mu_l, \quad l \in \{1,2\}, \quad  d=\rho_1 \rho_2\left(d_1+d_2\right).
\end{equation*}
 Under the heavy traffic condition
\begin{equation*}
	\sqrt{n}(\rho^n-1) \to c, \quad n \to \infty
\end{equation*}
and the Lindeberg conditions
\begin{equation*}
	\lim_{n \to \infty}\mathbb{E}\left(\xi^{n,l}_1\right)^2 \cdot 1_{\{\xi^{n,l}_1>\epsilon \sqrt{n}\}}=\lim_{n \to \infty}\mathbb{E}\left(\eta^{n,l}_1\right)^2 \cdot 1_{\{\eta^{n,l}_1>\epsilon \sqrt{n}\}}=\lim_{n \to \infty}\mathbb{E}\left(s^{n,l}_1\right)^2 \cdot 1_{\{s^{n,l}_1>\epsilon \sqrt{n}\}}=0,
\end{equation*}
we have \begin{equation*}
	V^n_{\alpha} \Rightarrow V_{\alpha}, \quad n \to \infty
\end{equation*}
in the Skorokhod space $\mathbb{D}(\left[0,\,+\infty\right>)$ equipped with the $J_1$ topology.
 
\end{theorem}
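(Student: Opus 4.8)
The plan is to derive the fractional heavy-traffic limit from the classical one, [Theorem~1, \cite{Coffman1998}], by a continuous-mapping argument applied to the composition (time-change) operator, exploiting the independence of $E$ from the pre-limit processes. First, under the stated hypotheses --- the parameter convergences $\lambda^n_l\to\lambda_l$, $\mu^n_l\to\mu_l$, $\sigma^n_l\to\sigma_l$, $d^n_l\to d_l$, the heavy-traffic scaling $\sqrt{n}(\rho^n-1)\to c$, and the three Lindeberg conditions --- [Theorem~1, \cite{Coffman1998}] yields $V^n\Rightarrow V$ in $\mathbb{D}(\left[0,+\infty\right>)$ with the $J_1$ topology, where $V$ is the diffusion solving $dV(t)=(c+d/V(t))\,dt+\sigma\,dB(t)$; up to the usual space--time rescaling this is a Bessel process with constant drift in the parametrization of Section~\ref{Bessel_process}, and $V_\alpha=V\circ E$ is the associated fractional Bessel process \eqref{fracBesselprocess}, so the object identified here is exactly the process studied in Section~\ref{frac_Bessel_process}.

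The next step is to upgrade this to joint convergence with the time change. Since $(\mathbb{D}(\left[0,+\infty\right>),J_1)$ is Polish, since $E$ is independent of every $V^n$ and of $V$, and since the law of $E$ does not depend on $n$, the product measures converge, so $(V^n,E)\Rightarrow(V,E)$ in $(\mathbb{D},J_1)\times(\mathbb{D},J_1)$ with $V$ and $E$ independent in the limit; this is the standard fact that marginal weak convergence together with independence gives joint weak convergence on separable spaces. One then applies the continuous-mapping theorem to the composition operator $\phi(x,y)=x\circ y$, regarded as a (Borel measurable) map from $\mathbb{D}\times\mathbb{D}_{\uparrow}$ into $\mathbb{D}$, where $\mathbb{D}_{\uparrow}$ denotes the nondecreasing $[0,+\infty)$-valued c\`adl\`ag functions. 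The crucial analytic input is that $\phi$ is continuous in the $J_1$ topology at every pair $(x,y)$ for which $x$ is continuous; in particular, the flat stretches of $y$ --- which for the inverse stable subordinator $E$ correspond to the jumps of the driving subordinator $D$, $E$ itself having continuous sample paths --- cause no difficulty because $x$ is continuous there (see, e.g., \cite{meerschaert2019stochastic} and the references therein). Because the limiting diffusion $V$ is non-exploding and almost surely pathwise continuous, the pair $(V,E)$ lies in the continuity set of $\phi$ with probability one, and the continuous-mapping theorem gives $V^n_{\alpha}=V^n\circ E\Rightarrow V\circ E=V_{\alpha}$ in $(\mathbb{D}(\left[0,+\infty\right>),J_1)$, which is the assertion.

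The main obstacle is this last step: one must invoke the composition-operator continuity statement in precisely the form required here, namely continuity at $(x,y)$ under only the assumptions that $x$ is continuous and $y$ is nondecreasing, with \emph{no} strict-monotonicity hypothesis on $y$, since $E$ generically has flat stretches; and one must check that the Bessel-with-drift limit $V$ does not explode and has continuous paths on $[0,+\infty)$, which follows from the standard one-dimensional-diffusion analysis via the scale and speed densities of Section~\ref{Bessel_process} (the $d/V$ term prevents absorption at $0$). The remaining ingredients --- the appeal to [Theorem~1, \cite{Coffman1998}] and the passage from marginal to joint convergence --- are routine.
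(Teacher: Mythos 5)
Your proposal is correct and follows essentially the same route as the paper: invoke [Theorem~1, \cite{Coffman1998}] to get $V^n\Rightarrow V$, then use independence of $E$ and the continuity of the composition map (the ``classical arguments'' the paper attributes to \cite{whitt2002stochastic}) to conclude $V^n\circ E\Rightarrow V\circ E$. You simply spell out in more detail the joint-convergence step and the precise continuity condition on the composition operator (continuity of the outer limit path $V$, since $E$ is not strictly increasing), which the paper leaves implicit.
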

\begin{proof}
Under the conditions of the theorem, we have
 \begin{equation*}
	V^n \Rightarrow V, \quad n \to \infty
\end{equation*}
in the Skorokhod space $\mathbb{D}(\left[0,\,+\infty\right>)$ with the $J_1$ topology (see [Theorem 1., \cite{Coffman1998}]). Since the inverse $\alpha$-stable subordinator $(E(t), \, t \geq 0)$ is a.s.~continuous, non-decreasing,  and independent of both $V^n$ and $V$, by classical arguments (\cite{whitt2002stochastic}) we have the desired convergence
\begin{equation*}
	V^n_{\alpha} \Rightarrow V_{\alpha}, \quad n \to \infty
\end{equation*}
in the Skorokhod space $\mathbb{D}(\left[0,\,+\infty\right>)$ with $J_1$ topology.
\end{proof}

The idea behind the random time-change in the processes \eqref{HTApproxProcesses} via the inverse of the $\alpha$-stable subordinator is as follows. If the server in the aforementioned polling system malfunctions and stops working for random periods of time, then the total unfinished work remains the same until the server is repaired and new customers are allowed to enter the system. 

The parameter $\alpha$ controls the length of the malfunction periods in the following way:
\begin{itemize}
	\item Smaller values of $\alpha$ correspond to longer malfunction periods, i.e.~extended intervals during which the total unfinished work remains constant.
	\item Larger values of $\alpha$  correspond to shorter malfunction periods, i.e.~brief intervals during which the total unfinished work remains constant.
	\item  Setting $\alpha=1$ recovers the classical result of [Theorem 1., \cite{Coffman1998}].
\end{itemize}
This means that the heavy traffic regime with random malfunction times can be modeled by stochastic process $V^{\alpha}(t)=\{\sigma X_{\alpha}(t), \, t \geq 0\}$, where $X_{\alpha}(t)$ is defined by \eqref{fracBesselprocess} and the corresponding parameters from \eqref{Bessel_SDE} are given by
\begin{equation*}
	\mu=\frac{c}{\sigma}, \quad \nu=\frac{d}{\sigma^2}-\frac{1}{2}.
\end{equation*} 
Therefore, one can exploit the spectral representation result in Theorem \ref{fracBesselprocess_trans_theorem} to compute the asymptotic distributional properties of the two-queue polling model with exhaustive service and possible random malfunction times.
Under the assumptions of Theorem \ref{CoffmanHTApprox}, and for any real-valued continuous function $f$, the well-known heavy traffic averaging principle (HTAP) is valid (see [Theorem 2.2., \cite{Coffman1998}]):
\begin{equation*}
	\int_{0}^{t}f(V^{n,l}_s)ds \Rightarrow \int_{0}^{t}\int_{0}^{1}f(u V_s)du ds, \quad n \to \infty.
\end{equation*}
It turns out that the same reasoning used in the previous theorem extends this result to the time-changed model as well:

\begin{equation*}
	\int_{0}^{E(t)}f(V^{n,l}_s)ds \Rightarrow \int_{0}^{E(t)}\int_{0}^{1}f(u V_s)du ds, \quad n \to \infty.
\end{equation*}

\bigskip \bigskip

\textbf{Acknowledgements} \newline
This work was supported by the Croatian Science Foundation under the project number HRZZ-IP-2022-10-8081. \\

\newpage

\bibliographystyle{agsm}

\bibliography{FBP_references}

\end{document}